\newtheorem{theorem}{Theorem}[section]
\newtheorem{lemma}[theorem]{Lemma}
\numberwithin{equation}{section}
\newcommand{\ds}{\displaystyle}
\def\R{\mathbb R}
\def\Z{\mathbb Z}
\def\pa{\partial}
\def\taue{\tau^{\varepsilon}}
\def\ue{u^{\varepsilon}}
\def\ve{v^{\varepsilon}}
\def\we{w^{\varepsilon}}
\def\taub{\overline{\tau}}
\def\ub{\overline{u}}
\def\wb{\overline{w}}
\def\vb{\overline{v}}
\def\rhoe{\rho^{\varepsilon}}
\def\je{j^{\varepsilon}}
\def\rhob{\overline{\rho}}
\def\jb{\overline{j}}
\def\sumi{\sum_{i\in \Z}}
\def\Dxx{{D}_{xx}}
\def\tDxx{\tilde{D}_{xx}}
\def\tDtx{\tilde{D}_{tx}}
\title[Numerical convergence rate for a diffusive limit of
  hyperbolic systems]
{Numerical convergence rate for a diffusive limit of
  hyperbolic systems: $p$-system with damping}
\author[C. Berthon]{Christophe Berthon}
\address{Universit\'e de Nantes \\ Laboratoire de Math\'ematiques Jean Leray, CNRS UMR 6629 \\
2 rue de la Houssini\`ere, BP 92208 \\ 44322 Nantes, France}
\email{christophe.berthon@univ-nantes.fr}
\author[ M. Bessemoulin-Chatard]{Marianne Bessemoulin-Chatard}
\address{Universit\'e de Nantes \\ Laboratoire de Math\'ematiques Jean Leray, CNRS UMR 6629 \\
2 rue de la Houssini\`ere, BP 92208 \\ 44322 Nantes, France}
\email{marianne.bessemoulin@univ-nantes.fr}
\author[H. Mathis]{H\'el\`ene Mathis}
\address{Universit\'e de Nantes \\ Laboratoire de Math\'ematiques Jean Leray, CNRS UMR 6629 \\
2 rue de la Houssini\`ere, BP 92208 \\ 44322 Nantes, France}
\email{helene.mathis@univ-nantes.fr}
\begin{document}

%\pagestyle{myheadings}
%\thispagestyle{plain}
%\markboth{C. Berthon, M. Bessemoulin-Chatard, H. Mathis}
%{Numerical convergence rate for the diffusive limit of hyperbolic systems}

%\renewcommand{\thefootnote}{\fnsymbol{footnote}}

%\footnotetext[1]
%{Universit\'e de Nantes, Laboratoire de Math\'ematiques Jean Leray, CNRS UMR 6629, 
%2 rue de la Houssini\`ere, BP 92208, 44322 Nantes, France.}

%%%%%%%%%%%%%%%%%%%%%%%%%%%%%%

%\begin{AMS}
%65M60, 65M12
%\end{AMS}
\subjclass[2000]{65M08, 65M12}

%\begin{keywords}
%Asymptotic Preserving scheme, numerical convergence rate, relative entropy
%\end{keywords}
\keywords{Asymptotic Preserving scheme, numerical convergence rate, relative entropy}

\begin{abstract}
This paper deals with diffusive limit of the $p$-system with damping 
and its approximation by an Asymptotic Preserving (AP) Finite Volume
scheme. Provided the system is endowed with an entropy-entropy flux
pair, we give the convergence rate of classical solutions of the
$p$-system with damping towards the smooth solutions of the porous
media equation using a relative entropy method. Adopting a
semi-discrete scheme, we establish that the convergence rate is
preserved by the approximated solutions. Several numerical experiments
illustrate the relevance of this result.
\end{abstract}

\maketitle

%%%%%%%%%%%%%%%%%%%%%%%%%%%%%%

\section{Introduction}
The present work is devoted to analyze the behavior of numerical
sche\-mes, within some asymptotic regimes, when approximating the
solutions of the $p$-system with damping. The system under
consideration reads 
\begin{equation}\label{p_sys}
\left\{\begin{aligned}
& \pa_{t}\tau-\pa_{x}v=0,\\
& \pa_{t}v+\pa_{x}p(\tau)=-\sigma\,v,
\end{aligned}\right. \quad (x,t) \in \R \times \R_{+},
\end{equation}
where $\tau>0$ stands for the specific volume of gas away from zero and $v\in\R$ is
the velocity while $\sigma>0$ denotes the friction parameter. The
pressure law $p(\tau)$ fulfills the following assumptions:
\begin{equation} \label{hyp_p}
\begin{aligned}
& p \in \mathcal{C}^{2}(\mathbb{R}_{+}^{*}),
 \quad p(\tau) > 0, \quad p'(\tau)<0,\\
& \mbox{if $\tau\ge c>0$ then there exists $m$ such that 
  $p(\tau)\ge m>0$ and $p'(\tau)\le-m<0$.}
\end{aligned}
\end{equation}

The solution $w={}^t(\tau,v)$ is assumed to belong to the following
phase space
$$
\Omega=\left\{
w={}^t(\tau,v); ~ \tau>0,v\in\R \right\}.
$$

In addition, in order to rule out unphysical solutions, the system
(\ref{p_sys}) is endowed with an entropy inequality given by
\begin{equation}\label{entropy_p_sys}
\pa_{t}\eta(\tau,v)+\pa_{x}\psi(\tau,v)\leq-\sigma\,v^{2}
\leq 0,
\end{equation}
where the entropy function is given by
$$
\eta(\tau,v)=\frac{v^2}{2}-P(\tau).
$$
The quantity $-P(\tau)$ denotes an internal energy and is
defined by
\begin{equation}\label{defP}
P(\tau)=\int_{\tau_\star}^\tau p(s) \,ds,
\end{equation}
where we have set $\tau_\star>0$ an arbitrary fixed reference specific
volume. In (\ref{entropy_p_sys}), the function $\psi$ is the
entropy flux function defined as follows:
\begin{equation}\label{fluxpsys}
\psi(\tau,u)=u\,p(\tau).
\end{equation}

The study of long time asymptotic for hyperbolic systems of
conservation laws, as (\ref{p_sys}), goes back to the work of Hsiao
and Liu \cite{Hsiao1992}. They consider the isentropic Euler system
with damping which solutions tend to those of the nonlinear porous
media equation time asymptotically. Using the existence of
self-similar solutions of the limit parabolic equations proved in
\cite{vDuyn_Peletier76,vDuyn_Peletier77}, they provide convergence
rates in $\|(w-\bar w)(t)\|_{L^\infty}=O(1) t^{-1/2}$ for smooth
solutions away from zero. Here, $\wb={}^t(\taub,\vb)$ defines the
solution of the following parabolic-type system, the
so-called porous media equation:
\begin{equation}\label{p_sys0}
\left\{\begin{aligned}
&    \pa_{t}\taub+\ds{\frac{1}{\sigma}}\pa_{xx}p(\taub)=0,\\
&    \pa_{x}p(\taub)=-\sigma\,\ub,
\end{aligned}\right.
\qquad (x,t) \in \R \times \R_{+}.
\end{equation}

Some similar convergence rates have been
obtained by Nishihara \cite{Nishihara1996, Nishihara97}.  Then, under
proper assumptions on the initial data, Nishihara and co-authors
\cite{Nishihara00} improve the convergence rate as $\|(w-\bar
w)(t)\|_{L^\infty}=O(1) t^{-3/2}$, using energy estimates techniques.
For a more general overview, we refer to the review of Mei
\cite{Mei2010} where the author gives numerous references about
convergence results for the long-time asymptotic behavior of the
$p$-system with damping (\ref{p_sys}) including references concerning
non-linear damping and boundary effects. Let us emphasize that, in
\cite{BHN07}, the authors exhibit convergence rate in time for general
dissipative hyperbolic systems under the Shizuta-Kawashima condition \cite{kawa87}.

All the aforementioned results are based on energy estimates which are
difficult to transpose in the discrete framework.  To overcome these
difficulties, an other way to study the time-asymptotic behavior of
solutions of \eqref{p_sys} is to use an appropriate time-rescaling
(for instance, see \cite{Marcati1988,Mei2010}), here governed by a
small parameter $\varepsilon>0$. We also refer the
reader to \cite{Lions1997,Naldi1998,Naldi2000} devoted to related
works where the parameter $\varepsilon>0$ is directly proportional
to the Knudsen number and the Mach number of the kinetic model.

Here, we are concerned by solutions within asymptotic regimes governed
by long time and dominant friction. As a consequence, a small
parameter $\varepsilon>0$ scales the solutions
${}^t(\taue,\ve)$ under interest which now satisfy the
following PDE system:
\begin{equation}\label{p_sys_eps_bis}
\left\{\begin{aligned}
&      \varepsilon\,\pa_{t}\taue-\pa_{x}\ve=0,\\
&      \varepsilon\,\pa_{t}\ve+\pa_{x}p(\taue)=
      -\ds{\frac{\sigma}{\varepsilon}}\,\ve,
\end{aligned}\right.
\qquad (x,t) \in \R \times \R_{+}.
\end{equation}

Because of the dominant friction, we immediately note that the
velocity solution is in the form $\ve=\varepsilon\ue$. Therefore, in
this paper, we focus on the pair $\we={}^t(\taue,\ue)\in\Omega$ solution of the
system given by
\begin{equation}\label{p_sys_eps}
\left\{\begin{aligned}
&    \pa_{t}\taue-\pa_{x}\ue=0,\\
&    \varepsilon^{2}\,\pa_{t}\ue+\pa_{x}p(\taue)=-\sigma\,\ue,
\end{aligned}\right.
\qquad (x,t) \in \R \times \R_{+},
\end{equation}
supplemented by the following entropy inequality
\begin{equation}\label{entropy_p_sys_eps}
\pa_{t}\eta^\varepsilon(\taue,\ue)+\pa_{x}\psi(\taue,\ue)\leq-\sigma\,(\ue)^{2}
\leq 0,
\end{equation}
where we have set
\begin{equation}\label{entropypsys}
\eta^{\varepsilon}(\tau,u)=\varepsilon^{2}\frac{u^{2}}{2}-P(\tau).
\end{equation}

From now on, let us underline that, in the limit of $\varepsilon$ to
zero, the solutions $\we={}^t(\taue,\ue)$ of (\ref{p_sys_eps})
converge, in a sense to be prescribed, to the solutions
$\bar{w}={}^t(\taub,\ub)$ of (\ref{p_sys0}).

Considering the behavior of the solutions of (\ref{p_sys_eps}) to the
solutions of (\ref{p_sys0}), we study the convergence of the solutions
of a hyperbolic system endowed with a stiff source term to the
solution of a parabolic problem.

Next, the existence of an entropy-entropy flux pair
$(\eta^\varepsilon,\psi)$, associated with (\ref{p_sys_eps}), where
$\eta^\varepsilon\in C^2(\Omega)$ is a strictly convex function, turns out to
be an essential ingredient in the analysis of the convergence from
$\we$ to $\bar{w}$ as $\varepsilon$ goes to zero.
Indeed, we can define the relative entropy $\eta(\we|w)$ 
of the system \eqref{p_sys_eps} which corresponds to a first order
Taylor expansion of $\eta^\varepsilon$ around
a smooth solution $\bar w$ of \eqref{p_sys0}:
\begin{equation}
 \label{eq:RE}
 \eta(\we|\bar w) = \eta^\varepsilon(\we) - \eta^\varepsilon(\bar w) 
              - \nabla\eta^\varepsilon(\bar w) \cdot (\we-\bar w ),
\end{equation}
where $\we$ is a (classical) solution of \eqref{p_sys_eps}.
Thanks to the convexity of $\eta^\varepsilon$, the relative entropy 
$\eta(\we|\bar w)$ behaves like $\|\we-\bar w\|^2_{L^2(\mathbb R)}$.

The notion of relative entropy for hyperbolic systems of conservation
laws goes back to the works of DiPerna \cite{DiP79} and Dafermos
\cite{Daf79}.  It allows to prove a stability result for classical
solutions in the class of entropy weak solutions, see
\cite{dafermosBook} for a condensed proof.

In \cite{Tza05}, Tzavaras applies a similar relative entropy technique
to study the convergence of the classical solutions of hyperbolic
systems with stiff relaxation towards smooth solutions of the limit
hyperbolic systems.  Thanks to the quadratic behavior of the relative
entropy, one can control the distance between the relaxation dynamics
and the equilibrium solutions, leading to stability and convergence
results.  Based on the same ideas, Lattanzio and Tzavaras address in
\cite{Lattanzio2013} the case of diffusive relaxation. They focus on
several hyperbolic systems with diffusive relaxation of type
\eqref{p_sys_eps}.  Under some regularity assumptions on the pressure
law, they provide convergence rate in $\varepsilon^4$.
Recently in \cite{CT2016}, the authors extend the relative entropy
method to the class of hyperbolic systems which are symmetrizable,
leading to similar convergence results in the zero-viscosity limit to
smooth solutions in a $L^p$ framework.

The main objective of this work is to recover the convergence rate in
$\varepsilon^4$ when both $\we$ and $\bar{w}$ are approximated by
relevant numerical schemes. From a numerical point of view, one of
the main difficulty stays in the derivation of a suitable
discretization of (\ref{p_sys_eps}) in order to get the required
discretization of (\ref{p_sys0}) in the limit of $\varepsilon$ to
zero.

Let us set $(H_{\Delta}^{\varepsilon})$ a discretization of the
hyperbolic system \eqref{p_sys_eps}, where $\Delta$ stands for the
discretization parameter. We distinguish two types of numerical
schemes:

\begin{itemize}
\item The scheme $(H_{\Delta}^{\varepsilon})$ is said to be
  \textbf{Asymptotically Consistent} with the parabolic limit regime
  (AC) if it is consistent with the hyperbolic model \eqref{p_sys_eps}
  for all $\varepsilon>0$ and if, in the limit
  $\varepsilon \rightarrow 0$, it converges to a scheme, say
  $(P_\Delta)$, consistent with the limit parabolic model \eqref{p_sys0}.

\item The scheme $(H_{\Delta}^{\varepsilon})$ is said to be
  \textbf{Asymptotic Preserving} (AP) if it is AC and if the
  stability conditions stay admissible for all $\varepsilon> 0$.
\end{itemize}

\begin{figure}\label{diagAP}
\begin{center}
\hspace*{-0mm}
\begin{minipage}{7cm}
\begin{tikzpicture}
\matrix (m) [matrix of math nodes,row sep=4em,column sep=10em,minimum width=2em]
{(H^{\varepsilon}_{\Delta}) & (H^{\varepsilon}) \\
(P_{\Delta})  & (P)\\};
\path[-stealth]
(m-2-1) edge node [below] {$\Delta \rightarrow 0$} (m-2-2)
(m-1-1) edge node [left] {$\varepsilon \rightarrow 0$} (m-2-1)
(m-1-2) edge node [right] {$\varepsilon \rightarrow 0$} (m-2-2)
(m-1-1) edge node [above] {$\Delta \rightarrow 0$} (m-1-2);
\end{tikzpicture}
\end{minipage}
\begin{minipage}{7cm}
$(H^\varepsilon)$: Scaled hyperbolic system (\ref{p_sys_eps})\\
$(P)$: Parabolic asymptotic regime (\ref{p_sys0})\\
$(H^\varepsilon_\Delta)$: Discretization of (\ref{p_sys_eps})\\
$(P_\Delta)$: Discretization of (\ref{p_sys0})
\end{minipage}
\end{center}
\caption{Diagram of the asymptotic preserving properties}
\end{figure}
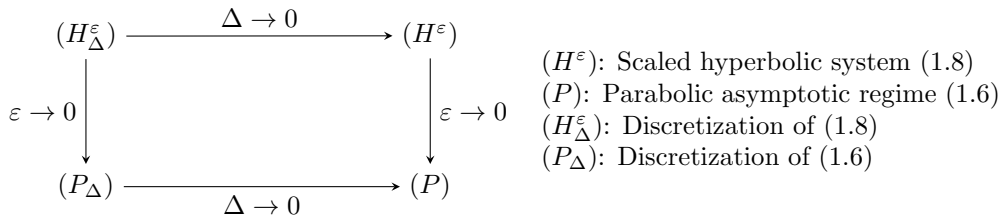

The notion of \textit{asymptotic-preserving scheme} was first
introduced by Jin \textit{et al.} in \cite{Jin1998,Jin1999} in the
context of diffusive limits for kinetic equations. Naldi and Pareschi
also proposed several numerical schemes for a two velocities kinetic
equation \cite{Naldi1998,Naldi2000}. Since these seminal articles, a
large variety of asymptotic-preserving schemes have been proposed, for
various physical models. Concerning specifically the discretization of
hyperbolic systems with source terms in the diffusive limit, Gosse and
Toscani proposed a well-balanced and asymptotic-preserving scheme for
the Goldstein-Taylor model in \cite{Gosse2002}, and then for more
general discrete kinetic models in \cite{Gosse2003}. In
\cite{Berthon2011}, Berthon and Turpault propose a modification of the
HLL scheme \cite{Harten1983} for hyperbolic systems to include source terms, and then a
correction which allows to be consistent at the diffusive limit. More recently, several works are devoted to the derivation of asymptotic-preserving schemes for 2D problems on unstructured meshes \cite{Blachere2016,Buet2012,Buet2016}.

The purpose of this article is to study the convergence rate of the
numerical scheme $(H^{\varepsilon}_{\Delta})$ towards the numerical
scheme $(P_{\Delta})$ as $\varepsilon$ tends to 0 (see Figure
\ref{diagAP}). After the work by Lattanzio and Tzavaras
\cite{Lattanzio2013}, we here adopt an error estimation given by a
relative entropy in order to exhibit the required convergence rate from
$(H^{\varepsilon}_{\Delta})$ to $(P_{\Delta})$. Indeed, in
\cite{Lattanzio2013}, the relative entropy is considered to establish
the expected convergence rate from the scaled $p$-system
(\ref{p_sys_eps}) to the porous media problem (\ref{p_sys0}). Let us
note that the relative entropies have been recently suggested in
\cite{JV06,CMS15} in order to derive suitable error estimates for
finite volume approximations of smooth solutions of nonlinear
hyperbolic systems.

The paper is organized as follows. In the next section, for the sake
of completeness, we give the main properties satisfied by the relative
entropy associated with (\ref{p_sys_eps}). More precisely, we detail
the convergence rate obtained by Lattanzio and Tzavaras
\cite{Lattanzio2013}, from the so-called $p$-system (\ref{p_sys_eps})
to the porous media equation (\ref{p_sys0}). In fact, the establishment
of this result is constructive and it will be suitably adapted to get
the expected numerical convergence rate. Section 3 concerns our main
result. By adopting a semi-discrete in space numerical scheme to
approximate the weak solutions of (\ref{p_sys_eps}), we exhibit the
convergence rate as $\varepsilon$ goes to zero, to recover a
semi-discrete approximation of the porous media equation
(\ref{p_sys0}). Moreover, the obtained convergence rate, from a
numerical point of view, exactly coincides with the one established in
\cite{Lattanzio2013} from a continuous point of view. The numerical
convergence rate is next illustrated, in the last section, performing
several numerical experiments by adopting a full discrete scheme
proposed by Jin {\it et al.} \cite{Jin1998}. The performed simulations give
an approximated convergence rate in perfect agreement with the
numerical convergence rate established in Section 3. As a consequence,
it seems that our main result is thus optimal.

\section{Convergence in the diffusive limit}

In this section, we recall the convergence result established in
\cite{Lattanzio2013} since it is useful in the forthcoming numerical development. For the sake of simplicity, the convergence statement is given by arguing smooth solutions. Such an assumption is not at all restrictive in the derivation of our main numerical result established in the next section. We refer to \cite{Lattanzio2013} to extend the following results with weak solutions.

 To exhibit the rate of convergence from
$(\taue,\ue)$, solution of (\ref{p_sys_eps}), to $(\bar{\tau},\bar{u})$,
solution of (\ref{p_sys0}), in the limit of $\varepsilon$ to zero,
Lattanzio and Tzavaras \cite{Lattanzio2013} adopt the well-known
relative entropy to define an error estimate. Considering the
$p$-system (\ref{p_sys_eps}), the relative entropy is defined by
\begin{align}
\eta^{\varepsilon}(\tau,u|\taub,\ub)&=\eta^{\varepsilon}(\tau,u)-\eta^{\varepsilon}(\taub,\ub)
-\nabla\eta^{\varepsilon}(\taub,\ub)\cdot
\left(\begin{array}{l}\tau-\taub\\u-\ub \end{array}\right),\nonumber\\
&=\frac{\varepsilon^{2}}{2}(u-\ub)^{2}-P(\tau|\taub),\label{entropyrelpsys}
\end{align}
with 
\begin{equation}\label{def-P-Tau-Taub}
P(\tau|\taub)=P(\tau)-P(\taub)-p(\taub)(\tau-\taub).
\end{equation}

This relative entropy satisfies an evolution law given in the
following statement.
\begin{lemma}\label{propentropypsys}
Let $(\taue,\ue)$ be a strong entropy solution of \eqref{p_sys_eps}
and $(\taub,\ub)$ be a smooth solution of \eqref{p_sys0}. Then the
relative entropy $\eta^{\varepsilon}$, defined by
(\ref{entropyrelpsys}), satisfies the following evolution law:
\begin{align}
\pa_{t}\eta^{\varepsilon}(\taue,\ue|\taub,\ub)+ & \pa_{x}\psi(\taue,\ue|\taub,\ub)=\nonumber\\ 
& -\sigma(\ue-\ub)^{2}+\frac{1}{\sigma}p(\taue|\taub)\pa_{xx}p(\taub)
+\frac{\varepsilon^{2}}{\sigma}(\ue-\ub)\pa_{xt}p(\taub),\label{ineqentropyrelPsys}
\end{align}
where 
\begin{align}
& \psi(\tau,u|\taub,\ub) = (u-\ub)(p(\tau)-p(\taub)),\label{flux_etrp_relat}\\
& p(\tau|\taub)=p(\tau)-p(\taub)-p'(\taub)(\tau-\taub).\label{def-p-Tau-Taub}
\end{align}
\end{lemma}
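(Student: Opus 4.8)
The plan is to compute $\pa_t\eta^{\varepsilon}(\we|\wb)+\pa_x\psi(\we|\wb)$ directly from the definition \eqref{entropyrelpsys}, using the chain rule and then eliminating every time derivative by means of the two systems \eqref{p_sys_eps} and \eqref{p_sys0}. The only structural input needed is that $\nabla\eta^{\varepsilon}(\tau,u)={}^t(-p(\tau),\varepsilon^2 u)$ and $\psi(\tau,u)=u\,p(\tau)$. Dotting the gradient with \eqref{p_sys_eps} first gives the entropy identity for the exact solution,
\[
\pa_t\eta^{\varepsilon}(\taue,\ue)+\pa_x\psi(\taue,\ue)=-\sigma(\ue)^2,
\]
which I would use to treat the genuine-solution part of the relative entropy.

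Expanding $\pa_t\eta^{\varepsilon}(\we|\wb)$ from \eqref{entropyrelpsys}, the terms built from $\pa_t\eta^{\varepsilon}(\wb)$ cancel in pairs, leaving
\[
\pa_t\eta^{\varepsilon}(\we|\wb)=\pa_t\eta^{\varepsilon}(\we)-\nabla\eta^{\varepsilon}(\wb)\cdot\pa_t\we-\pa_t\big[\nabla\eta^{\varepsilon}(\wb)\big]\cdot(\we-\wb).
\]
Into the first two terms I would insert the entropy identity above together with $\pa_t\taue=\pa_x\ue$ and $\varepsilon^2\pa_t\ue=-\pa_x p(\taue)-\sigma\ue$ from \eqref{p_sys_eps}. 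After using $\pa_x(\ue p(\taue))=p(\taue)\pa_x\ue+\ue\pa_x p(\taue)$, this produces a flux group $-(p(\taue)-p(\taub))\pa_x\ue-(\ue-\ub)\pa_x p(\taue)$ and a source contribution $-\sigma\ue(\ue-\ub)$.

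The delicate step is to add $\pa_x\psi(\we|\wb)$, with the relative flux \eqref{flux_etrp_relat}. Expanding $\pa_x[(\ue-\ub)(p(\taue)-p(\taub))]$ by the product rule, the terms carrying $\pa_x\ue$ and $\pa_x p(\taue)$ cancel exactly against those of the previous group, and what survives is only the ``mismatch'' $-(p(\taue)-p(\taub))\pa_x\ub-(\ue-\ub)\pa_x p(\taub)$. This cancellation, which hinges on correctly distinguishing the relative flux $\psi(\we|\wb)$ from the relative pressure $p(\taue|\taub)$, is the main obstacle: it is pure bookkeeping, but it is exactly where the fact that $\wb$ solves the parabolic system rather than \eqref{p_sys_eps} first enters.

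It remains to insert the parabolic relations from \eqref{p_sys0}, namely $\pa_x p(\taub)=-\sigma\ub$, $\pa_x\ub=-\tfrac1\sigma\pa_{xx}p(\taub)$ and $\pa_t\ub=-\tfrac1\sigma\pa_{xt}p(\taub)$, together with $\pa_t[\nabla\eta^{\varepsilon}(\wb)]={}^t(-p'(\taub)\pa_t\taub,\varepsilon^2\pa_t\ub)$ and $\pa_t\taub=-\tfrac1\sigma\pa_{xx}p(\taub)$. The two source-type terms combine into $-\sigma(\ue-\ub)^2$; the two contributions proportional to $\pa_{xx}p(\taub)$ regroup as $\tfrac1\sigma[(p(\taue)-p(\taub))-p'(\taub)(\taue-\taub)]\pa_{xx}p(\taub)$, which is exactly $\tfrac1\sigma p(\taue|\taub)\pa_{xx}p(\taub)$ by \eqref{def-p-Tau-Taub}; and the single $\varepsilon^2$ term left over from $\pa_t[\nabla\eta^{\varepsilon}(\wb)]$ yields $\tfrac{\varepsilon^2}{\sigma}(\ue-\ub)\pa_{xt}p(\taub)$. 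Summing these three groups gives precisely \eqref{ineqentropyrelPsys}.
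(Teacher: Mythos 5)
Your proof is correct and follows essentially the same route as the paper's: a direct chain-rule computation of $\pa_t\eta^{\varepsilon}(\we|\wb)$, substitution of both systems \eqref{p_sys_eps} and \eqref{p_sys0}, regrouping into the relative flux $\psi(\we|\wb)$ and relative pressure $p(\taue|\taub)$, and finally the Darcy relation $\pa_x p(\taub)=-\sigma\ub$ to produce $\pa_{xx}p(\taub)$ and $\pa_{xt}p(\taub)$. The only difference is bookkeeping: you split off the exact entropy identity for $\we$ and track the cross terms via the three-term expansion of $\pa_t\eta^{\varepsilon}(\we|\wb)$, whereas the paper rewrites the parabolic system with the same left-hand side as \eqref{p_sys_eps} (carrying $\varepsilon^{2}\pa_t\ub$ as a residual source) and expands everything at once; the cancellations are identical.
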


Let us emphasize that equality \eqref{ineqentropyrelPsys} becomes an inequality as soon as the smoothness of solution $(\taue,\ue)$ is lost. The numerical counterpart is fully proved in the next section.

\begin{proof}
First, let us rewrite the parabolic system \eqref{p_sys0} such that we
get the same left hand side than for the scaled $p$-system
\eqref{p_sys_eps}. Then, (\ref{p_sys0}) reads equivalently as follows:
\begin{equation}\label{eqPsystbis}
\left\{\begin{aligned}
&\ds{\pa_{t}\taub-\pa_{x}\ub=0},\\
&\ds{\varepsilon^{2}\pa_{t}\ub+\pa_{x}p(\taub)=-\sigma\,\ub+\varepsilon^{2}\pa_{t}\ub.}
\end{aligned}\right.
\end{equation}

As a consequence, the derivative with respect to time of the relative entropy
\eqref{entropyrelpsys} satisfies the following sequence of equalities:
\begin{align*}
\pa_{t}\eta^{\varepsilon}(\taue,\ue|\taub,\ub)
&=\varepsilon^{2}(\ue-\ub)\pa_{t}(\ue-\ub)-p(\taue)\pa_{t}\taue+p(\taub)\pa_{t}\taub\\
& \quad \quad\quad
+p'(\taub)\pa_{t}\taub(\taue-\taub)+p(\taub)\pa_{t}(\taue-\taub)\\
%\, \\
&= -(\ue-\ub)\pa_{x}\left(p(\taue)-p(\taub)\right)-\sigma(\ue-\ub)^{2}
   -\varepsilon^{2}(\ue-\ub)\pa_{t}\ub\\
& \quad\quad\quad
-p(\taue)\pa_{x}\ue+p'(\taub)(\taue-\taub)\pa_{x}\ub+p(\taub)\pa_{x}\ue,\\ 
%\, \\
&=-\sigma(\ue-\ub)^{2}+\frac{\varepsilon^{2}}{\sigma}(\ue-\ub)\pa_{xt}p(\taub)\\ 
& \quad\quad\quad
  -\pa_{x}\Big(\left(p(\taue)-p(\taub)\right)(\ue-\ub)\Big)-p(\taue|\taub)\pa_{x}\ub.
\end{align*}
The expected result directly comes from $-\sigma\ub=\pa_{x}\ub$ to
write $\pa_{x}\ub = -\frac{1}{\sigma}\pa_{xx}p(\taub)$. The proof is thus completed.
\end{proof}

From now on, let us establish a technical result satisfied by the
relative internal energy $P(\tau|\taub)$, defined by (\ref{def-P-Tau-Taub}).
\begin{lemma}\label{lemma:p}
Assume that the pressure function $p(\tau)$ satisfies the conditions
(\ref{hyp_p}). Then there exists two positive constants, $C$ and $C'$,
such that for all $\tau\geq c>0$ and $\taub\geq c>0$, we have
\begin{equation}
|p(\tau|\taub)|\leq C'(\tau-\taub)^{2}\leq -C \, P(\tau|\taub).
\end{equation}
where $P(\tau|\taub)$ and $p(\tau|\taub)$ are respectively defined by
(\ref{def-P-Tau-Taub}) and (\ref{def-p-Tau-Taub}).
\end{lemma}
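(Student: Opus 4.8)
The plan is to read both $p(\tau|\taub)$ and $P(\tau|\taub)$ as the second‑order Taylor remainders they are: by the definition \eqref{defP} one has $P'=p$ and $P''=p'$, so $P(\tau|\taub)$ is exactly the remainder of the first‑order expansion of $P$ at $\taub$, and $p(\tau|\taub)$ that of $p$. I would record the two Taylor identities with integral remainder,
\[
P(\tau|\taub)=\int_{\taub}^{\tau}(\tau-s)\,p'(s)\,ds,\qquad
p(\tau|\taub)=\int_{\taub}^{\tau}(\tau-s)\,p''(s)\,ds,
\]
or, equivalently, in Lagrange form, $P(\tau|\taub)=\tfrac12 p'(\zeta)(\tau-\taub)^2$ and $p(\tau|\taub)=\tfrac12 p''(\xi)(\tau-\taub)^2$ for intermediate points $\zeta,\xi$ lying between $\tau$ and $\taub$, hence both $\ge c$.

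From the Lagrange form of $P$ the right‑hand inequality comes essentially for free: since $p'(\zeta)\le -m$ whenever $\zeta\ge c$, the concavity of $P$ gives $P(\tau|\taub)\le -\tfrac{m}{2}(\tau-\taub)^2\le 0$, that is $-P(\tau|\taub)\ge \tfrac{m}{2}(\tau-\taub)^2$. Consequently, once the left‑hand bound $|p(\tau|\taub)|\le C'(\tau-\taub)^2$ is secured, it suffices to take $C\ge 2C'/m$, because then $-C\,P(\tau|\taub)\ge C\,\tfrac{m}{2}(\tau-\taub)^2\ge C'(\tau-\taub)^2$. So the whole statement reduces to controlling $|p(\tau|\taub)|$.

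For that left‑hand inequality I would bound $|p(\tau|\taub)|=\tfrac12|p''(\xi)|(\tau-\taub)^2$ and set $C'=\tfrac12\sup|p''|$. This is the delicate point: since $p\in \mathcal C^2(\mathbb{R}_+^*)$, the second derivative $p''$ is continuous but a priori only locally bounded, so that a genuinely uniform constant $C'$ requires $\xi$ to range over a set on which $p''$ is bounded. I would therefore make explicit that the admissible specific volumes stay in a fixed region bounded away from the vacuum (and, for the smooth solutions at hand, bounded above as well), on which $\sup|p''|<\infty$; the hypotheses \eqref{hyp_p} supply precisely this kind of uniform control away from $\tau=0$.

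The main obstacle is thus not the Taylor algebra, which is routine, but pinning down the range of $\tau$ and $\taub$ over which $C'$ is taken, so that $\sup|p''|$ is finite and the two constants $C,C'$ are truly independent of $\tau$ and $\taub$ as the quantifier ``for all $\tau,\taub\ge c$'' demands. Once that range is fixed, chaining the two Lagrange estimates through the common factor $(\tau-\taub)^2$ closes the argument immediately.
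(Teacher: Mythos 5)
Your proposal is correct and follows essentially the same route as the paper: the paper writes the same two Taylor remainders (in the integral form $p(\tau|\taub)=(\tau-\taub)^2\int_0^1(1-s)p''(\taub+s(\tau-\taub))\,ds$ and likewise for $P$), bounds $|p''|$ by a constant to get $|p(\tau|\taub)|\le C'(\tau-\taub)^2$, uses $p'\le -m$ from (\ref{hyp_p}) to get $-P(\tau|\taub)\ge m(\tau-\taub)^2$, and sets $C=C'/m$. The uniformity issue you flag (that $p''\in\mathcal{C}^0(\mathbb{R}_+^*)$ is only locally bounded, so $C'$ is uniform only when $\tau,\taub$ range over a compact set away from vacuum) is genuine, but the paper's proof passes over it, asserting the bound simply ``because of the smoothness of $p$''; your version is in this respect the more careful one.
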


\begin{proof}
Since $p$ belongs to $C^2(\R_+^*)$, by definition of $p(\tau|\taub)$ and
$P(\tau|\taub)$, we immediately get
\begin{equation*}
\begin{aligned}
& p(\tau|\taub)=(\tau-\taub)^2\int_0^1 
     (1-s)p''\left(\taub+s(\tau-\taub)\right)\,ds,\\
& P(\tau|\taub)=(\tau-\taub)^2\int_0^1 
     (1-s)p'\left(\taub+s(\tau-\taub)\right)\,ds.
\end{aligned}
\end{equation*}
Because of the smoothness of $p$, there exists a positive constant $C'$
such that $|p''(\taub+s(\tau-\taub))|\leq 2C'$ for all $s\in(0,1)$. As
a consequence, we obtain
$$
|p(\tau|\taub)|\leq C'(\tau-\taub)^2.
$$

Moreover, the condition (\ref{hyp_p}) imposes the existence of a
positive constant $m$ such that $p'(\taub+s(\tau-\taub))\leq -2m$ for
all $s\in(0,1)$. Then we have
$$
-P(\tau|\taub)\ge m(\tau-\taub)^2.
$$
By considering $C=C'/m$, the proof is achieved.
\end{proof}

Arguing with these properties satisfied by the relative entropy, we
are now able to compare $(\taue,\ue)$, solution of (\ref{p_sys_eps}),
with $(\taub,\ub)$, solution of (\ref{p_sys0}). To address such an
issue and according to the assumptions stated in \cite{Lattanzio2013}
(see also \cite{Nishihara1996}), we impose that the porous media
equation is given for admissible specific volumes $\taub\geq
c>0$. Moreover, the solutions of \eqref{p_sys0} are assumed to be
smooth, hence we can consider regularity on the pressure function
$(x,t)\mapsto p(\taub(x,t))$ and its derivatives.

In addition, we suppose that the systems \eqref{p_sys_eps} and
\eqref{p_sys0} are endowed with initial conditions such that the
following limits hold:
\begin{equation}
  \label{eq:CI}
  \begin{aligned}
   & \lim_{x\to \pm \infty} \taue(x,t) = \lim_{x\to \pm \infty}\taub(x,t) = \tau_{\pm},\\
   & \lim_{x\to \pm\infty}\ue(x,t)=\lim_{x\to \pm\infty}\ub(x,t) = 0,
  \end{aligned}
\end{equation}
where $\tau_{\pm}$ are positive constant specific volume.

Now, let us introduce the positive error estimate given by
\begin{equation}\label{defphi}
\phi^{\varepsilon}(t)=\int_{\R}\eta^{\varepsilon}(\taue,\ue|\taub,\ub)dx,
\end{equation}
to establish the expected convergence rate away from vanishing specific volume (see also \cite{Lattanzio2013}).

\begin{theorem}\label{thrmcvpsys}
Consider initial data $(\taub_0(x),\ub_0(x))$ for (\ref{p_sys0}) and
$(\taue_0(x),\ue_0(x))$ for (\ref{p_sys_eps}) such that
$\phi^{\varepsilon}(0)<+\infty$. Endowed with these initial data, let
$(\taub,\ub)$ be the smooth solution of \eqref{p_sys0} defined on
$Q_{T}=\R\times[0,T)$, and let $(\taue,\ue)$ be a strong entropy solution of
\eqref{p_sys_eps}. Let us assume that $\taub\geq c>0$. Moreover, let
us assume that there exists
a positive constant $K$ such that
$\|\pa_{xx}p(\taub)\|_{L^{\infty}(Q_{T})}\leq K$ and 
$\|\pa_{xt}p(\taub)\|_{L^{2}(Q_{T})}\leq K$. Then the following stability estimate holds:
\begin{equation}\label{stabpsyst}
\phi^{\varepsilon}(t)\leq Ce^{CT}(\phi^{\varepsilon}(0)+\varepsilon^{4}), \quad t\in[0,T),
\end{equation}
where $C$ is a constant depending on $\sigma$ and $p(\taub)$. Moreover, if
$\phi^{\varepsilon}(0)\rightarrow 0$ as $\varepsilon \rightarrow 0$,
then
\begin{equation}\label{convergencepsys}
\sup_{t\in[0,T)}\phi^{\varepsilon}(t)\rightarrow 0, \text{ as }
\varepsilon \rightarrow 0. 
\end{equation}
\end{theorem}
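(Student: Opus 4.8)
The plan is to convert the pointwise evolution law of Lemma~\ref{propentropypsys} into a differential inequality for the global quantity $\phi^{\varepsilon}(t)$ of \eqref{defphi}, and then close with Gronwall's lemma. First I would integrate \eqref{ineqentropyrelPsys} over $x\in\R$. The flux contribution $\int_{\R}\pa_{x}\psi(\taue,\ue|\taub,\ub)\,dx$ vanishes: by \eqref{flux_etrp_relat} one has $\psi(\taue,\ue|\taub,\ub)=(\ue-\ub)(p(\taue)-p(\taub))$, and the decay hypotheses \eqref{eq:CI} force both factors to tend to $0$ as $x\to\pm\infty$ (the velocities vanish and $\taue,\taub\to\tau_{\pm}$, so $p(\taue)-p(\taub)\to 0$). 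This leaves
\begin{equation*}
\frac{d}{dt}\phi^{\varepsilon}(t)=\int_{\R}\Big(-\sigma(\ue-\ub)^{2}+\tfrac{1}{\sigma}p(\taue|\taub)\pa_{xx}p(\taub)+\tfrac{\varepsilon^{2}}{\sigma}(\ue-\ub)\pa_{xt}p(\taub)\Big)\,dx.
\end{equation*}

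Next I would estimate the two non-dissipative terms on the right. For the pressure term, Lemma~\ref{lemma:p} gives $|p(\taue|\taub)|\le -C\,P(\taue|\taub)$, and since $-P(\taue|\taub)\le\eta^{\varepsilon}(\taue,\ue|\taub,\ub)$ directly from \eqref{entropyrelpsys}, combining this with $\|\pa_{xx}p(\taub)\|_{L^{\infty}(Q_{T})}\le K$ controls $\tfrac{1}{\sigma}\int_{\R}p(\taue|\taub)\pa_{xx}p(\taub)\,dx$ by $\tfrac{CK}{\sigma}\phi^{\varepsilon}(t)$, i.e. a term linear in $\phi^{\varepsilon}$ that will supply the Gronwall exponent. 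The cross term I would handle with Young's inequality, writing $\tfrac{\varepsilon^{2}}{\sigma}(\ue-\ub)\pa_{xt}p(\taub)\le\tfrac{\sigma}{2}(\ue-\ub)^{2}+\tfrac{\varepsilon^{4}}{2\sigma^{3}}(\pa_{xt}p(\taub))^{2}$.

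The decisive point is that the first piece $\tfrac{\sigma}{2}(\ue-\ub)^{2}$ is absorbed by the damping dissipation $-\sigma(\ue-\ub)^{2}$, leaving a strictly negative remainder $-\tfrac{\sigma}{2}(\ue-\ub)^{2}$ that I simply discard; the whole factor $\varepsilon^{2}$ survives in the other piece and is squared, so that term carries $\varepsilon^{4}$ rather than $\varepsilon^{2}$—this is precisely where the quartic rate originates. After dropping the negative velocity term I obtain
\begin{equation*}
\frac{d}{dt}\phi^{\varepsilon}(t)\le\frac{CK}{\sigma}\,\phi^{\varepsilon}(t)+\frac{\varepsilon^{4}}{2\sigma^{3}}\int_{\R}(\pa_{xt}p(\taub))^{2}\,dx.
\end{equation*}
Integrating this linear differential inequality by Gronwall over $[0,t]\subset[0,T)$ and bounding $\int_{0}^{t}\!\int_{\R}(\pa_{xt}p(\taub))^{2}\,dx\,ds\le\|\pa_{xt}p(\taub)\|_{L^{2}(Q_{T})}^{2}\le K^{2}$ yields \eqref{stabpsyst} with a constant $C$ depending only on $\sigma$, $K$ and $p(\taub)$. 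The convergence claim \eqref{convergencepsys} is then immediate, since the right-hand side of \eqref{stabpsyst} is uniform in $t\in[0,T)$: if $\phi^{\varepsilon}(0)\to0$ as $\varepsilon\to0$, then $\sup_{t\in[0,T)}\phi^{\varepsilon}(t)\to0$.

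I expect the only genuinely delicate step to be the Young splitting, where one must choose the weight so that the quadratic velocity term is dominated by the available dissipation $-\sigma(\ue-\ub)^{2}$ while the residual error scales as $\varepsilon^{4}$ and not merely $\varepsilon^{2}$; this balance is exactly what the damping term makes possible. Justifying the vanishing of the flux integral also relies on the decay hypotheses \eqref{eq:CI}, but this is routine given the smoothness assumptions on $(\taub,\ub)$.
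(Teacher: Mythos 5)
Your proposal is correct and follows essentially the same route as the paper's proof: integration of the evolution law from Lemma~\ref{propentropypsys} with the flux vanishing by \eqref{eq:CI}, control of the pressure term via Lemma~\ref{lemma:p} and $-P(\taue|\taub)\le\eta^{\varepsilon}(\taue,\ue|\taub,\ub)$, Young's inequality with weight $\sigma/2$ to absorb the cross term into the damping dissipation (which is indeed where the $\varepsilon^{4}$ rate arises), and Gr\"onwall's lemma. The only cosmetic difference is that you work with the differential form of the inequality for $\phi^{\varepsilon}(t)$ while the paper integrates in space and time first and applies the integral form of Gr\"onwall.
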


\begin{proof}
Arguing the limit assumptions (\ref{eq:CI}), we have
$\psi^{\varepsilon}(\taue,\ue|\taub,\ub)\rightarrow 0$ in the limit $x
\rightarrow \pm \infty$. As a consequence, the integration of
\eqref{ineqentropyrelPsys} over $\mathbb{R}\times[0,t]$, for all
$t<T$, gives
\begin{equation}\label{thrm-continu-1}
\begin{aligned}
\phi^{\varepsilon}(t)-\phi^{\varepsilon}(0)\leq
-\sigma\int_{0}^{t}\int_{\R}(\ue-\ub)^{2}dx\,ds+\frac{1}{\sigma}\int_{0}^{t}\int_{\R}
\pa_{xx}p(\taub)\,p(\taue|\taub)dx\,ds\\
+\frac{\varepsilon^{2}}{\sigma}\int_{0}^{t}\int_{\R}\pa_{xt}p(\taub)\,(\ue-\ub)dx\,ds.
\end{aligned}
\end{equation}

Now, we estimate the integrals within the above relation. First, by
Lemma \ref{lemma:p} and since $\|\pa_{xx}p(\taub)\|_{L^{\infty}}\leq
K$, there exists a positive constant, say $C$, such that we have
\begin{equation*}
\frac{1}{\sigma}\int_{0}^{t}\int_{\R}|\pa_{xx}p(\taub)\,p(\taue|\taub)|dx\,ds\leq
\frac{C}{\sigma}\int_{0}^{t}\phi^{\varepsilon}(s)\,ds.
\end{equation*}

Concerning the last integral in (\ref{thrm-continu-1}), applying
Cauchy-Schwarz and Young's inequalities together with the assumption
on $\|\pa_{xt}p(\taub)\|_{L^{2}(Q_{T})}\leq K$, we immediately obtain
\begin{equation*}
\begin{aligned}
\frac{\varepsilon^{2}}{\sigma}\int_{0}^{t}\int_{\R}|\pa_{xt}p(\taub)\,(\ue-\ub)|dx\,ds
& \leq \frac{\sigma}{2}\int_{0}^{t}\int_{\R}(\ue-\ub)^{2}dx\,ds
+\frac{\varepsilon^{4}}{2\,\sigma^{3}}\int_{0}^{t}\int_{\R}|\pa_{xt}p(\taub)|^{2}dx\,ds\\
& \leq \frac{\sigma}{2}\int_{0}^{t}\int_{\R}(\ue-\ub)^{2}dx\,ds+C\,\varepsilon^{4}. 
\end{aligned}
\end{equation*}

As a consequence, identity \eqref{thrm-continu-1} now reads
$$
\phi^{\varepsilon}(t)-\phi^{\varepsilon}(0)\leq
   -\frac{\sigma}{2}\int_{0}^{t}\int_{\R}(\ue-\ub)^{2}dx\,ds
   +\frac{C}{\sigma}\int_{0}^{t}\phi^{\varepsilon}(s)\,ds
   +C\,\varepsilon^{4},
$$
to get
$$ 
\phi^{\varepsilon}(t)\le\phi^{\varepsilon}(0)
   +\frac{C}{\sigma}\int_{0}^{t}\phi^{\varepsilon}(s)\,ds
   +C\,\varepsilon^{4}.
$$
The required estimation \eqref{stabpsyst} is then obtained by the
Gr\"onwall's inequality. The proof is thus completed.
\end{proof}

\section{Semi-discrete finite volume scheme and numerical convergence rate}
\label{sec:semi-discrete-finite}
In this section, our purpose concerns the evaluation of the
convergence rate where both solutions $\we$ and $\wb$ are approximated
by a semi-discrete scheme.

Let us consider a uniform mesh made of cells
$(x_{i-\frac{1}{2}},x_{i+\frac{1}{2}})_{i\in\Z}$ of constant size $\Delta
x$. Here, the discretization points are given by $x_i=i\Delta x$ for
all $i\in\Z$. On each cell $(x_{i-\frac{1}{2}},x_{i+\frac{1}{2}})$,
the solutions of (\ref{p_sys_eps}) are approximated by time dependent
piecewise constant function $w_i(t)={}^t(\tau_i(t),u_i(t))$. For the sake of
clarity in the notations, we omit the dependence on the parameter
$\varepsilon$. Next, these functions are evolved in time by adopting a
semi-discrete scheme. Here, the suggested semi-discrete scheme is base
on the standard HLL numerical flux (see \cite{Harten1983}).
Hence the semi-discrete in space numerical scheme, to approximate the
solutions of (\ref{p_sys_eps}), reads
\begin{equation}
  \label{eq:Hd}
\left\{  \begin{aligned}
   \dfrac{d}{dt} \tau_i &= \dfrac{1}{2\Delta x} \left(
      u_{i+1} - u_{i-1}\right) + \dfrac{\lambda}{2\Delta x}(\tau_{i+1}-2\tau_i+\tau_{i-1}
      ),\\
   \dfrac{d}{dt}u_i &=\dfrac{\lambda}{2\Delta x}(u_{i+1}-2u_i+u_{i-1})
    - \dfrac{1}{2\varepsilon^2 \Delta x}(p(\tau_{i+1}) -
    p(\tau_{i-1})) -\dfrac{\sigma}{\varepsilon^2}u_i,
  \end{aligned}\right.
\end{equation}
where we have set 
\begin{equation}\label{defi_lambda}
\lambda=\sup_{t\in(0,T)}\max_{i\in \mathbb Z}(\sqrt{-p'(\tau_i)}).
\end{equation}
Let us underline that, as soon as $\varepsilon$ goes to zero,
the adopted semi-discrete finite volume scheme turns out to be
consistent with the porous media equation \eqref{p_sys0} (AC according
to the definition stated in the introduction). As a
consequence, the pair $\wb_i={}^t(\taub_i(t),\ub_i(t))$, to approximate the
solutions of (\ref{p_sys0}), are evolved in time as follows:
 \begin{equation}
      \label{eq:Pd}
\left\{      \begin{aligned}
        \dfrac{d}{dt} \bar\tau_i &= \dfrac{1}{2\Delta x} (
          \bar u_{i+1} - \bar u_{i-1} )+ \dfrac{\lambda}{2\,\Delta x}(\bar\tau_{i+1}-2 \bar\tau_i+\bar\tau_{i-1}
          ),\\
        \sigma \bar u_i &=-\dfrac{p(\bar\tau_{i+1}) -
        p(\bar\tau_{i-1})}{2\Delta x}
      \end{aligned}\right.
    \end{equation}

We now analyze the convergence from $(\tau_i,u_i)$ to
$(\taub_i,\ub_i)$ as $\varepsilon$ tends to zero. First, let us impose
the limit condition (\ref{eq:CI}) to be imposed to the approximate
solution as follows:
\begin{equation}
  \label{eq:CI_dis}
  \begin{aligned}
    \lim_{i\to \pm \infty} \tau_i &= \lim_{i\to \pm \infty} \bar \tau_i
    = \tau_{\pm},\\
    \lim_{i\to \pm \infty} u_i &= \lim_{i\to \pm \infty} \bar u_i
    = 0.
  \end{aligned}
\end{equation}

Next, to simplify the forthcoming estimations, we introduce several
semi-discrete norms. Let $v(t)=(v_i(t))_{i\in \mathbb Z}$ a function
of time $t\in[0,T)$ piecewise constant on cells
  $(x_{i-\frac{1}{2}},x_{i+\frac{1}{2}})$. Then we define
  \begin{align}
       &\|D_{x} v\|_{L^\infty(Q_T)} = \sup_{ t\in[0,T)} \sup_{i\in
        \mathbb Z} \left|\dfrac{v_{i+1} - v_i }{\Delta
          x} \right|,\nonumber\\
     & \|\tDxx v\|_{L^\infty(Q_T)} = \sup_{ t\in[0,T)} \sup_{i\in
        \mathbb Z} \left|\dfrac{v_{i+2} - 2v_i + v_{i-2}}{(2\Delta
          x)^2} \right|,\label{normeLinfDxxv}\\
     & \|\Dxx v\|_{L^{\infty}(Q_{T})}=\sup_{t\in[0,T)}\sup_{i\in\mathbb{Z}}\left|\frac{v_{i+1}-2v_{i}+v_{i-1}}{(\Delta x)^{2}}\right|, \label{normeLinftildeDxxv}\\    
      &\|\tDtx v\|_{L^2(Q_T)} = \left( \int_0^t \sumi \Delta x \;
        \left | \dfrac{d}{dt}\left(\dfrac{ v_{i+1} - v_{i-1}}{2\Delta x} \right)\right|^2 (s)
        ds\right)^{1/2},\label{normeL2Dtxv}\\
      &\|\Dxx  v\|_{L^2(Q_T)} = \left( \int_0^t \sumi \Delta x \;
        \left | \dfrac{v_{i+1} - 2v_i +
              v_{i-1}}{(\Delta x)^2} \right|^2 (s)
        ds\right)^{1/2}\label{normeL2Dxxv},
  \end{align}
where $Q_T=\mathbb R \times [0,T)$.

We adopt the approach introduced by Lattanzio and Tzavaras
\cite{Lattanzio2013} to the semi-discrete scheme (\ref{eq:Hd}). As a
first step, according to the definition of the relative entropy given
by (\ref{entropyrelpsys}), we now set
\begin{equation}    
  \label{eq:ERD}
  \begin{aligned}
    \eta_i^\varepsilon(t)&=\eta^\varepsilon(\tau_i, u_i|
    \bar \tau_i, \bar u_i)(t)\\
    &=\dfrac{\varepsilon^2}{2}(u_i(t)-\bar u_i(t))^2 - P(\tau_i(t)|\bar \tau_i(t)).
  \end{aligned}
\end{equation}

Mimicking the continuous framework, we introduce $\phi^\varepsilon(t)$
to denote the discrete space
integral of $\eta_i^\varepsilon(t)$ as follows:
\begin{equation}
  \label{eq:phieddef}
  \phi^\varepsilon(t) = \sum_{i \in \mathbb Z} \Delta x\; \eta_i^\varepsilon(t).
\end{equation}
Without ambiguity and for the sake of clarity, the time dependence is omitted in the sequel.

Now, we give our main result.
\begin{theorem}\label{thm:2}
  Let $\wb_i(t)=(\bar \tau_i(t), \bar u_i(t))_{i \in \mathbb Z}$ be
    a smooth solution of (\ref{eq:Pd}) away from zero, defined on $Q_T= \mathbb R
    \times [0,T)$. We assume the existence of a positive constant
      $K<+\infty$ such that the following estimations are satisfied:
    \begin{align}
      \label{eq:normD1}
&      \|\tDtx p(\bar \tau)\|_{L^2(Q_T)}\leq K,\qquad  \|\tDxx p(\bar
      \tau)\|_{L^\infty(Q_T)}\leq K\\
      \label{eq:normD2}
&      \|\Dxx\bar \tau\|_{L^\infty(Q_T)}\leq K, \qquad \|D_x\bar
      \tau\|_{L^\infty(Q_T)}\leq K,\qquad \|\Dxx \bar u\|_{L^2(Q_T)}\leq K.
    \end{align}
Let $w_i(t)=(\tau_i(t),u_i(t))_{i \in \mathbb Z}$ be a solution of
\eqref{eq:Hd}, away from zero, such that $\phi^\varepsilon(0) <+\infty$.
Then we have
    \begin{equation}
      \label{eq:rateD}
      \phi^\varepsilon(t) \leq Be^{BT}(\phi^\varepsilon(0) +
      \varepsilon^4), \quad t\in [0,T),
    \end{equation}
where $B$ is a positive constant which depends on $K$ and $\sigma$. Moreover if $\phi^\varepsilon(0)
\to 0$ as $\varepsilon\to 0$ then $\sup_{t\in[0,T)}\phi^\varepsilon(t)
  \to 0$ when $\varepsilon\to 0$.
\end{theorem}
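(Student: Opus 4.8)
The strategy is to exactly mirror the continuous proof of Theorem \ref{thrmcvpsys} at the semi-discrete level, the key being to derive a discrete analogue of the evolution law \eqref{ineqentropyrelPsys} for $\frac{d}{dt}\phi^\varepsilon(t)$. First I would compute $\frac{d}{dt}\eta_i^\varepsilon$ directly from \eqref{eq:ERD}, using the scheme \eqref{eq:Hd} to substitute for $\frac{d}{dt}\tau_i$ and $\frac{d}{dt}u_i$, and the scheme \eqref{eq:Pd} to handle the barred quantities. As in the continuous case, I would rewrite the parabolic scheme \eqref{eq:Pd} so that its left-hand side matches that of \eqref{eq:Hd}, i.e. introduce the residual term $\varepsilon^2\frac{d}{dt}\bar u_i$ into the second equation. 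After multiplying by $\Delta x$ and summing over $i\in\mathbb Z$, the discrete flux terms should telescope (using Abel summation and the limit conditions \eqref{eq:CI_dis}, which kill boundary contributions), leaving a discrete identity of the form
\begin{equation*}
\frac{d}{dt}\phi^\varepsilon(t) = -\sigma\sumi\Delta x\,(u_i-\bar u_i)^2 + \frac{1}{\sigma}\sumi\Delta x\, p(\tau_i|\bar\tau_i)\,(\Dxx\, p(\bar\tau))_i + \frac{\varepsilon^2}{\sigma}\sumi\Delta x\,(u_i-\bar u_i)\,\frac{d}{dt}\!\left(\frac{p(\bar\tau_{i+1})-p(\bar\tau_{i-1})}{2\Delta x}\right) + R,
\end{equation*}
in analogy with \eqref{ineqentropyrelPsys}, where $R$ collects the extra terms generated by the numerical diffusion (the $\lambda$-terms in \eqref{eq:Hd}–\eqref{eq:Pd}) and by the discrepancy between discrete difference operators.

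Once this identity is in hand, the conclusion follows the same route as Theorem \ref{thrmcvpsys}. I would bound the second sum using Lemma \ref{lemma:p} (applied pointwise in $i$, which gives $|p(\tau_i|\bar\tau_i)|\le C' P(\tau_i|\bar\tau_i)\cdot(-1)$, hence control by $\eta_i^\varepsilon$) together with the hypothesis $\|\tDxx p(\bar\tau)\|_{L^\infty(Q_T)}\le K$, producing a term $\le \frac{C}{\sigma}\phi^\varepsilon(t)$. For the third sum I would apply discrete Cauchy–Schwarz and Young's inequality, splitting off $\frac{\sigma}{2}\sumi\Delta x(u_i-\bar u_i)^2$ to be absorbed by the dissipative term, and bounding the remainder by $C\varepsilon^4$ using $\|\tDtx p(\bar\tau)\|_{L^2(Q_T)}\le K$. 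The dissipative term $-\sigma\sumi\Delta x(u_i-\bar u_i)^2$ is then used to control these two contributions; after integrating in time from $0$ to $t$, one obtains an inequality of the form $\phi^\varepsilon(t)\le \phi^\varepsilon(0) + B\int_0^t\phi^\varepsilon(s)\,ds + B\varepsilon^4$, and Grönwall's inequality yields \eqref{eq:rateD}.

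The main obstacle, and the genuinely new work compared to the continuous proof, is the error term $R$. At the discrete level several identities used in the continuous argument fail exactly: the discrete chain rule does not hold, so $\frac{d}{dt}P(\tau_i|\bar\tau_i)$ does not reduce cleanly to the continuous expression, and the centered difference of $p(\tau)$ appearing in the flux does not telescope against the three-point diffusion stencil without remainder. I expect $R$ to consist of consistency-type errors measuring the gap between operators such as $\Dxx$ (the $(\Delta x)^2$-normalized three-point operator in \eqref{normeLinftildeDxxv}) and $\tDxx$ (the $(2\Delta x)^2$-normalized operator in \eqref{normeLinfDxxv}), together with the numerical-diffusion cross terms. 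The crux is to show that $R$ can likewise be bounded by $\frac{C}{\sigma}\phi^\varepsilon(t) + C\varepsilon^4$, uniformly in $\Delta x$; this is precisely where the full suite of regularity bounds \eqref{eq:normD2} on $\bar\tau$ and $\bar u$ enters, letting me estimate the discrete derivatives of the smooth barred solution and convert the second-order finite-difference remainders (via discrete Taylor expansion and summation by parts) into terms proportional to $\phi^\varepsilon$ or to $\varepsilon^4$. Carefully matching each diffusion and difference remainder against one of the hypotheses in \eqref{eq:normD1}–\eqref{eq:normD2} is the technical heart of the argument, and the reason the theorem requires more assumptions than its continuous counterpart.
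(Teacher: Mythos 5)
Your skeleton matches the paper's proof: a discrete evolution law for $\eta_i^\varepsilon$ mirroring \eqref{ineqentropyrelPsys} plus residual terms, then the same two estimates as in Theorem \ref{thrmcvpsys} (Lemma \ref{lemma:p} combined with $\|\tDxx p(\bar \tau)\|_{L^\infty(Q_T)}\le K$ for the diffusion term; Cauchy--Schwarz and Young with $\|\tDtx p(\bar \tau)\|_{L^2(Q_T)}\le K$, absorbing $\frac{\sigma}{2}\int_0^t\sumi\Delta x\,(u_i-\bar u_i)^2$ into the dissipation, for the mixed term), and finally Gr\"onwall. The genuine gap lies exactly where you defer the work: the residual $R$. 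You propose to bound it by ``discrete Taylor expansion and summation by parts'', using the regularity hypotheses \eqref{eq:normD2} to estimate discrete derivatives of the \emph{barred} solution. But the residuals are not consistency errors of the barred solution. Written out (the paper's \eqref{eq:Ri}), they are $R_i^u=\frac{\lambda\varepsilon^2}{2\Delta x}(u_i-\bar u_i)(u_{i+1}-2u_i+u_{i-1})$ and an analogous $R_i^\tau$, both containing discrete second differences of the \emph{hyperbolic} solution $(\tau_i,u_i)$, for which no regularity is assumed or available. No Taylor argument applies to those factors, and they are not controlled by $\phi^\varepsilon$ either, so the scheme you describe for $R$ would not close.

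What actually makes the estimate work is structural, not consistency-based. For $R_i^u$, one splits $u_{i+1}-2u_i+u_{i-1}=(\bar u_{i+1}-2\bar u_i+\bar u_{i-1})+\big((u_{i+1}-\bar u_{i+1})-2(u_i-\bar u_i)+(u_{i-1}-\bar u_{i-1})\big)$; after summation in $i$ (using \eqref{eq:CI_dis}), the second piece contributes $-\sumi\big((u_{i+1}-\bar u_{i+1})-(u_i-\bar u_i)\big)^2\le 0$, and only the barred piece is estimated, via Cauchy--Schwarz and Young against $\|\Dxx\bar u\|_{L^2(Q_T)}$. For $R_i^\tau$, one performs a discrete integration by parts, introduces the difference quotients $\frac{p(\tau_{i+1})-p(\tau_i)}{\tau_{i+1}-\tau_i}$, and uses the monotonicity $p'<0$ from \eqref{hyp_p} to discard a sign-definite quadratic term; the rest is controlled by the Lipschitz continuity of $p'$ together with $\|D_x\bar\tau\|_{L^\infty(Q_T)}$ and $\|\Dxx\bar\tau\|_{L^\infty(Q_T)}$, ending in a bound by $\int_0^t\phi^\varepsilon(s)\,ds$. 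Note also that the resulting bound for $R^u$ is \emph{not} of the form $\frac{C}{\sigma}\phi^\varepsilon+C\varepsilon^4$ that you announce: it contains the term $\frac{\lambda\theta}{2}\int_0^t\sumi\Delta x\,(u_i-\bar u_i)^2\,ds$, which cannot be dominated by $\phi^\varepsilon$ (the relative entropy only yields $\sumi\Delta x\,(u_i-\bar u_i)^2\le 2\phi^\varepsilon/\varepsilon^2$, with a fatal $\varepsilon^{-2}$); it must be absorbed into the dissipation $-\sigma\int_0^t\sumi\Delta x\,(u_i-\bar u_i)^2\,ds$, which forces the choice $\theta\le\sigma/\lambda$ given the $\sigma/2$ already spent on the $\tDtx p(\bar\tau)$ term. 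These sign, monotonicity and absorption arguments are the technical heart of the theorem, and they are missing from your proposal.
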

Let us emphasize that the regularity conditions (\ref{eq:normD1})
exactly coincide with the smoothness imposed in Theorem
\ref{thrmcvpsys}. Here, because of the numerical viscous terms,
additional assumptions, stated in (\ref{eq:normD2}), must be imposed on
the approximate solution of the porous media equation. However such
conditions are not restrictive since solutions of the parabolic system
(\ref{p_sys0}), in general, come with enough smoothness.

Now, we turn to establish the above statement. To access such an
issue, we need three technical results. The first one is devoted to
exhibit the evolution law satisfied by the relative entropy
$\eta_i^\varepsilon$. We will see that this evolution law turns out to
be a discrete form of (\ref{ineqentropyrelPsys}) supplemented by
numerical viscosity. The two next Lemmas concern estimations of
the numerical viscous terms associated to the relative entropy.

Concerning the evolution law satisfied by $\eta_i^\varepsilon$, we
have the following result:
\begin{lemma} \label{lem1}
Let $(\bar \tau_i, \bar u_i)_{i \in \mathbb Z}$ be a smooth solution
of \eqref{eq:Pd} and let $(\tau_i,u_i)_{i \in \mathbb Z}$ be a
solution of \eqref{eq:Hd}. The relative entropy $\eta_i^\varepsilon$,
defined by (\ref{eq:ERD}), verifies the following evolution law:
  \begin{equation}
    \label{eq:dtERD}
    \begin{aligned}
      \dfrac{d\eta_i^\varepsilon}{dt} +\dfrac{1}{\Delta
        x}(\psi_{i+1/2}&-\psi_{i-1/2})= - \sigma(u_i-\bar u_i)^2\\
            &+ \dfrac{1}{\sigma} \dfrac{p(\bar \tau_{i+2}) - 2 p(\bar
        \tau_i) + p(\bar \tau_{i-2})}{(2\Delta x)^2}
      p(\tau_i|\bar\tau_i)\\
      &+ \dfrac{\varepsilon^2}{\sigma}(u_i-\bar
      u_i)\dfrac{d}{dt}\left( \dfrac{ p(\bar\tau_{i+1}) - p(\bar
          \tau_{i-1})}{2\Delta x}\right) \\
      &+ R_i^u + R_i^\tau,
    \end{aligned}
  \end{equation}
where $\psi_{i+1/2}$ corresponds to an approximation of the relative
entropy flux $\psi$ at the interface $x_{i+1/2}$ given by
\begin{equation}
  \label{eq:psiD}
  \psi_{i+1/2} =\dfrac 1 2 (u_i-\bar u_i)(p(\tau_{i+1}) - \bar p(\taub_{i+1})) +
  \dfrac 1 2 (u_{i+1} - \bar u_{i+1}) (p(\tau_i) - p(\bar \tau_i)),
\end{equation}
and the quantities $R_i^u$ and $R_i^\tau$ denote numerical residuals given by
\begin{equation}
  \label{eq:Ri}
  \begin{aligned}
    R_i^u&=\dfrac{\lambda \varepsilon^2}{2\Delta x} (u_i-\bar
  u_i)(u_{i+1}-2u_i+u_{i-1}),\\
    R_i^\tau&=-\dfrac{\lambda}{2\Delta x} \Big( (p(\tau_i) - p(\bar
      \tau_i)) (\tau_{i+1} -2\tau_i + \tau_{i-1})
    - (\tau_i-\bar
      \tau_i)p'(\bar \tau_i) (\bar \tau_{i+1} - 2\bar \tau_i + \bar
      \tau_{i-1})\Big).
  \end{aligned}
\end{equation}
\end{lemma}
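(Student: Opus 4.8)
The plan is to mimic the continuous computation from Lemma \ref{propentropypsys} at the semi-discrete level, keeping careful track of the extra terms produced by the numerical viscosity. The starting point is to differentiate the discrete relative entropy \eqref{eq:ERD} in time, writing
\[
\frac{d\eta_i^\varepsilon}{dt}=\varepsilon^2(u_i-\bar u_i)\frac{d}{dt}(u_i-\bar u_i)
-p(\tau_i)\frac{d\tau_i}{dt}+p(\bar\tau_i)\frac{d\bar\tau_i}{dt}
+p'(\bar\tau_i)(\tau_i-\bar\tau_i)\frac{d\bar\tau_i}{dt},
\]
using $\partial_\tau P(\tau|\bar\tau)=-(p(\tau)-p(\bar\tau))$ and the product rule on the relative-internal-energy term. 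Into this I would substitute the two evolution laws: for $d\tau_i/dt$ and $du_i/dt$ the scheme \eqref{eq:Hd}, and for $d\bar\tau_i/dt$ and $\sigma\bar u_i$ the limit scheme \eqref{eq:Pd}. The guiding principle, exactly as in the continuous proof, is to rewrite \eqref{eq:Pd} in the form analogous to \eqref{eqPsystbis}, i.e.\ to regard $\bar u_i$ as satisfying a momentum-type balance with the artificial term $\varepsilon^2\,d\bar u_i/dt$ added and subtracted, so that the discrete $\bar\tau,\bar u$ formally solve a system with the same left-hand side as \eqref{eq:Hd}.

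Next I would organize the resulting expression into the four groups appearing on the right-hand side of \eqref{eq:dtERD}. The dissipative term $-\sigma(u_i-\bar u_i)^2$ arises from the friction contributions in the $u$-equations, just as in the continuous case. The pressure-difference and convective contributions should telescope into the discrete flux divergence $\frac1{\Delta x}(\psi_{i+1/2}-\psi_{i-1/2})$ with $\psi_{i+1/2}$ as in \eqref{eq:psiD}; the symmetric split in the definition of $\psi_{i+1/2}$ is precisely what is needed so that a discrete Leibniz/summation-by-parts identity reproduces the centered differences $u_{i+1}-u_{i-1}$ and $p(\tau_{i+1})-p(\tau_{i-1})$ appearing in \eqref{eq:Hd}. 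The term $\frac1\sigma p(\tau_i|\bar\tau_i)$ times the discrete second difference of $p(\bar\tau)$ comes from using the discrete analogue of $\partial_x\bar u=-\frac1\sigma\partial_{xx}p(\bar\tau)$, which here reads $\sigma\bar u_i=-\frac{p(\bar\tau_{i+1})-p(\bar\tau_{i-1})}{2\Delta x}$ from \eqref{eq:Pd}, differenced once more to produce the factor $\frac{p(\bar\tau_{i+2})-2p(\bar\tau_i)+p(\bar\tau_{i-2})}{(2\Delta x)^2}$. The $\varepsilon^2$ term carrying $\frac{d}{dt}$ of the centered difference of $p(\bar\tau)$ is the discrete image of the $\frac{\varepsilon^2}{\sigma}(u-\bar u)\partial_{xt}p(\bar\tau)$ term, obtained by differentiating the $\bar u_i$ relation in \eqref{eq:Pd} in time.

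The genuinely new contributions are the two numerical viscosity terms, and I expect \textbf{the main obstacle} to be tracking them correctly. The $\lambda$-weighted second differences in \eqref{eq:Hd} and \eqref{eq:Pd} do not cancel in the relative-entropy computation the way a first-order term would; instead they leave the residuals $R_i^u$ and $R_i^\tau$ defined in \eqref{eq:Ri}. Concretely, the viscous term $\frac{\lambda}{2\Delta x}(u_{i+1}-2u_i+u_{i-1})$ in the $u$-equation, after multiplication by $\varepsilon^2(u_i-\bar u_i)$, yields $R_i^u$ directly; the delicate point is the $\tau$-viscosity, where $-p(\tau_i)$ multiplies $\frac{\lambda}{2\Delta x}(\tau_{i+1}-2\tau_i+\tau_{i-1})$ from \eqref{eq:Hd} while $p'(\bar\tau_i)(\tau_i-\bar\tau_i)$ multiplies the corresponding viscous term from \eqref{eq:Pd}, and the $p(\bar\tau_i)$ piece must be folded in so that the combination collapses into the relative-pressure structure $p(\tau_i)-p(\bar\tau_i)$ weighted against $\tau_{i+1}-2\tau_i+\tau_{i-1}$, giving $R_i^\tau$. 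Care is needed to ensure these residuals are exactly the stated expressions and not merely equal up to further telescoping terms; the subsequent two lemmas are designed to bound precisely these $R_i^u$ and $R_i^\tau$, so the identity must isolate them cleanly. Once the bookkeeping is complete and every term has been matched against the right-hand side of \eqref{eq:dtERD}, the identity holds as an exact equality because $(\tau_i,u_i)$ is assumed to be a genuine (smooth, away-from-zero) solution of the semi-discrete scheme, and the proof is concluded.
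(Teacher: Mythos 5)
Your plan follows the paper's proof step for step: differentiate \eqref{eq:ERD} in time, rewrite the second equation of \eqref{eq:Pd} by adding and subtracting $\varepsilon^2\frac{d}{dt}\ub_i$ (the discrete analogue of \eqref{eqPsystbis}), substitute both schemes, telescope the pressure/velocity cross terms into the flux \eqref{eq:psiD}, convert $\ub_{i+1}-\ub_{i-1}$ and $\frac{d}{dt}\ub_i$ through the relation $\sigma\ub_i=-\frac{p(\taub_{i+1})-p(\taub_{i-1})}{2\Delta x}$ into the second and third terms of \eqref{eq:dtERD}, and read off the residuals \eqref{eq:Ri}. This is exactly the paper's route; there is no methodological difference.

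There is, however, a concrete error in the starting identity, and it is the step everything else hangs on. Differentiating $-P(\tau_i|\taub_i)=-P(\tau_i)+P(\taub_i)+p(\taub_i)(\tau_i-\taub_i)$ with the product rule gives
\begin{equation*}
\frac{d}{dt}\eta_i^\varepsilon
=\varepsilon^2(u_i-\ub_i)\frac{d}{dt}(u_i-\ub_i)
-\bigl(p(\tau_i)-p(\taub_i)\bigr)\frac{d\tau_i}{dt}
+p'(\taub_i)(\tau_i-\taub_i)\frac{d\taub_i}{dt},
\end{equation*}
because the contributions $p(\taub_i)\frac{d\taub_i}{dt}$ and $-p(\taub_i)\frac{d\taub_i}{dt}$ cancel, so that $p(\taub_i)$ ends up weighting $\frac{d\tau_i}{dt}$ and not $\frac{d\taub_i}{dt}$. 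Your displayed formula instead keeps $p(\taub_i)\frac{d\taub_i}{dt}$ and drops the cross term $p(\taub_i)\frac{d}{dt}(\tau_i-\taub_i)$; the two expressions differ by exactly that quantity, which is neither small nor part of \eqref{eq:dtERD}. If you carry your version forward, the $\tau$-equation of \eqref{eq:Hd} enters with weight $-p(\tau_i)$ rather than $-(p(\tau_i)-p(\taub_i))$, and the $\taub$-equation of \eqref{eq:Pd} with weight $p(\taub_i)+p'(\taub_i)(\tau_i-\taub_i)$ rather than $p'(\taub_i)(\tau_i-\taub_i)$; then neither the flux difference $\psi_{i+1/2}-\psi_{i-1/2}$, nor the relative pressure $p(\tau_i|\taub_i)$, nor the residual $R_i^\tau$ can be isolated, since an uncancelled term $-p(\taub_i)\frac{d}{dt}(\tau_i-\taub_i)$ pollutes every group. (Relatedly, $\pa_\tau P(\tau|\taub)=p(\tau)-p(\taub)$; the minus sign you quote belongs to the $-P$ inside $\eta^\varepsilon$.) Your own prose about ``folding in the $p(\taub_i)$ piece'' so that the viscosity couples to $p(\tau_i)-p(\taub_i)$ describes the correct bookkeeping, so this reads as a slip rather than a conceptual gap; but the identity \eqref{eq:dtERD} only follows once the displayed derivative is corrected as above, after which your outline coincides with the paper's argument.
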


From now on, we state estimations satisfied by both residuals $R_i^u$
and $R_i^\tau$.

\begin{lemma}\label{lem2}
%  Controle de Riu
Let $K<+\infty$ be a positive constant.
  Assume $\|\Dxx \bar u\|_{L^2(Q_T)}^2\le K$, then for all $\theta\in \mathbb R^*_+$, we have
\begin{equation}
  \label{eq:Riu}
  \int_0^t \sum_{i\in \mathbb Z} \Delta x\; R_i^u ds \leq
  \dfrac{\lambda\theta}{2}\int_0^t \sum_{i\in \mathbb Z} \Delta x\;
  (u_i-\bar u_i)^2 ds +
  \dfrac{\varepsilon^4\lambda\,\Delta x}{2\theta}\|\Dxx \bar u\|_{L^2(Q_T)}^2.
\end{equation}
\end{lemma}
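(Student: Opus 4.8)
The plan is to integrate $R_i^u$ against the mesh, perform a discrete summation by parts that exposes a sign-definite dissipation of the error, and then close with Young's inequality. Writing $e_i = u_i - \bar u_i$ and pulling out the prefactor of $R_i^u$ from (\ref{eq:Ri}), I would first record
\[
\sumi \Delta x\, R_i^u = \frac{\lambda\varepsilon^2}{2}\sumi e_i\,(u_{i+1} - 2u_i + u_{i-1}).
\]
The decisive move is to insert $\pm\,\bar u$ inside the discrete Laplacian, splitting $u_{i+1} - 2u_i + u_{i-1}$ into the second difference of the error $e$ plus the second difference of $\bar u$. This separates the sum into a part depending only on $e$ and a cross part carrying the factor $\bar u_{i+1} - 2\bar u_i + \bar u_{i-1}$.

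The key step is a discrete integration by parts on the $e$-only contribution. Because of the limit conditions (\ref{eq:CI_dis}) the boundary terms vanish, and a shift of indices yields
\[
\sumi e_i(e_{i+1} - 2e_i + e_{i-1}) = -\sumi (e_{i+1} - e_i)^2 \le 0.
\]
This term is a discrete Dirichlet energy of the error and is nonpositive, so it can simply be discarded. This dissipative structure is the crux of the argument: it is precisely what lets the residual be controlled with no positive contribution from the Laplacian of $u$, leaving only the cross term to estimate.

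It then remains to bound $\frac{\lambda\varepsilon^2}{2}\sumi e_i(\bar u_{i+1} - 2\bar u_i + \bar u_{i-1})$. Here I would rewrite $\bar u_{i+1} - 2\bar u_i + \bar u_{i-1} = (\Delta x)^2(\Dxx \bar u)_i$ and apply Young's inequality $ab \le \tfrac{\theta}{2}a^2 + \tfrac{1}{2\theta}b^2$ with the free parameter $\theta$, distributing the product so that the $e_i^2$ contribution carries exactly the coefficient $\frac{\lambda\theta}{2}$ required in (\ref{eq:Riu}); the power $\varepsilon^4$ then appears automatically as the square, through Young's inequality, of the $\varepsilon^2$ in $R_i^u$. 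Integrating over $[0,t]$ and recognising the definition (\ref{normeL2Dxxv}) of $\|\Dxx\bar u\|_{L^2(Q_T)}$ produces the stated estimate. The main obstacle is purely bookkeeping: one must track the powers of $\Delta x$ correctly when passing between $\bar u_{i+1} - 2\bar u_i + \bar u_{i-1}$ and the normalised quantity $(\Dxx\bar u)_i$, and calibrate $\theta$ so that the coefficient of $\sumi\Delta x\,(u_i-\bar u_i)^2$ is the one that the entropy dissipation $-\sigma(u_i-\bar u_i)^2$ in (\ref{eq:dtERD}) will later absorb when this Lemma is fed back into the proof of Theorem \ref{thm:2}.
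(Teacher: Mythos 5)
Your proposal is correct and follows essentially the same route as the paper's proof: the same splitting of the discrete Laplacian by inserting $\pm\bar u$, the same summation by parts yielding the nonpositive discrete Dirichlet energy $-\sumi\bigl((u_{i+1}-\bar u_{i+1})-(u_i-\bar u_i)\bigr)^2$ under the limit conditions \eqref{eq:CI_dis}, and the same Cauchy--Schwarz/Young step with free parameter $\theta$ on the cross term, recognised via the norm \eqref{normeL2Dxxv}. Nothing is missing.
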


\begin{lemma}\label{lem3}
% Controle de Ritau  
Let $K<+\infty$ be a positive constant.
Let us assume $\|\Dxx\bar \tau\|_{L^\infty(Q_T)}\le K$ and $\|D_x
  \bar \tau\|_{L^\infty(Q_T)}<K$. Then there exists a positive
  constant $C$ such that
\begin{equation}
\label{eq:Ritau}
  \int_0^t \sum_{i\in \mathbb Z} \Delta x \; R_i^\tau ds \leq
  \lambda\Big( C\,\Delta x\|\Dxx \bar \tau\|_{L^\infty(Q_T)} +
C\|D_{x} \bar \tau\|_{L^\infty(Q_T)}\Big)\int_0^t \phi^\varepsilon(s) ds.
\end{equation}
\end{lemma}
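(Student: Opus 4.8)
The plan is to turn the spatial sum of the residual, $\sum_{i\in\Z}\Delta x\,R_i^\tau = -\tfrac{\lambda}{2}\sum_{i\in\Z}\bigl[(p(\tau_i)-p(\taub_i))(\tau_{i+1}-2\tau_i+\tau_{i-1}) - (\tau_i-\taub_i)p'(\taub_i)(\taub_{i+1}-2\taub_i+\taub_{i-1})\bigr]$, into a form that isolates a part controlled by the relative internal energy from a commutator. Setting $e_i=\tau_i-\taub_i$ and $\pi_i=p(\tau_i)-p(\taub_i)$, splitting $\tau_{i+1}-2\tau_i+\tau_{i-1}=(\taub_{i+1}-2\taub_i+\taub_{i-1})+(e_{i+1}-2e_i+e_{i-1})$, and using $p(\tau_i|\taub_i)=\pi_i-p'(\taub_i)e_i$ from \eqref{def-p-Tau-Taub}, I would first establish the identity
\[
\begin{aligned}
&\bigl(p(\tau_i)-p(\taub_i)\bigr)(\tau_{i+1}-2\tau_i+\tau_{i-1}) - (\tau_i-\taub_i)\,p'(\taub_i)\,(\taub_{i+1}-2\taub_i+\taub_{i-1})\\
&\qquad = p(\tau_i|\taub_i)(\taub_{i+1}-2\taub_i+\taub_{i-1}) + \pi_i\,(e_{i+1}-2e_i+e_{i-1}).
\end{aligned}
\]
The first summand is benign: Lemma \ref{lemma:p} gives $|p(\tau_i|\taub_i)|\le -C\,P(\tau_i|\taub_i)$, while $|\taub_{i+1}-2\taub_i+\taub_{i-1}|\le(\Delta x)^2\|\Dxx\taub\|_{L^\infty(Q_T)}$; since \eqref{eq:ERD} and Lemma \ref{lemma:p} yield $\phi^\varepsilon\ge\sum_{i}\Delta x\,(-P(\tau_i|\taub_i))\ge m\sum_i\Delta x\,e_i^2$, summing this summand produces exactly the advertised $\lambda\,C\,\Delta x\,\|\Dxx\taub\|_{L^\infty(Q_T)}\phi^\varepsilon$ contribution.

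The commutator contribution $-\tfrac{\lambda}{2}\sum_i\pi_i(e_{i+1}-2e_i+e_{i-1})$ is the heart of the matter. A discrete summation by parts, whose boundary terms vanish by the limit conditions \eqref{eq:CI_dis} (so that $e_i\to0$ and $\pi_i\to0$ as $i\to\pm\infty$), rewrites it as $\tfrac{\lambda}{2}\sum_i(\pi_{i+1}-\pi_i)(e_{i+1}-e_i)$. I would then Taylor-expand the pressure increment around the smooth state,
\[
\pi_{i+1}-\pi_i = p'(\taub_{i+1})(e_{i+1}-e_i) + \bigl(p'(\taub_{i+1})-p'(\taub_i)\bigr)e_i + \bigl(p(\tau_{i+1}|\taub_{i+1})-p(\tau_i|\taub_i)\bigr).
\]
The first piece contributes $\tfrac{\lambda}{2}\sum_i p'(\taub_{i+1})(e_{i+1}-e_i)^2\le -\lambda m\sum_i(e_{i+1}-e_i)^2\le0$, a dissipative term coming from the strict monotonicity $p'\le-2m$ in \eqref{hyp_p}. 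The second piece carries the factor $|p'(\taub_{i+1})-p'(\taub_i)|\le\|p''\|_\infty\Delta x\,\|D_x\taub\|_{L^\infty(Q_T)}$, and a Young inequality splits $|e_i||e_{i+1}-e_i|$ into an $e_i^2$ part (bounded through $\phi^\varepsilon\ge m\sum_i\Delta x\,e_i^2$, giving the $\lambda\,C\,\|D_x\taub\|_{L^\infty(Q_T)}\phi^\varepsilon$ term) and an $(e_{i+1}-e_i)^2$ part with a small coefficient. These remainders, together with the ones generated by the quadratic relative-pressure increments $p(\tau_{i+1}|\taub_{i+1})-p(\tau_i|\taub_i)$, are absorbed by the dissipative term, which is then discarded.

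The step I expect to be the main obstacle is precisely this commutator: summation by parts unavoidably produces first differences $e_{i+1}-e_i$ of the error, which $\phi^\varepsilon$ does not control, since it dominates only the pointwise squares $\sum_i\Delta x\,e_i^2$. The argument therefore relies on extracting, via $p'\le-2m<0$, a dissipative term $-\lambda m\sum_i(e_{i+1}-e_i)^2$ strong enough to absorb every $(e_{i+1}-e_i)^2$ remainder created by the variation of $\taub$ and by the quadratic relative-pressure increments; this is where the confinement of $(\tau_i)_{i\in\Z}$ and $(\taub_i)_{i\in\Z}$ to a fixed compact set away from the vacuum — guaranteeing uniform bounds on $p'$, $p''$ and on the errors $e_i$ — enters in an essential way. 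Once the spatial estimate holds pointwise in time, integration over $[0,t]$ yields \eqref{eq:Ritau}.
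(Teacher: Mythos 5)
Your opening moves coincide with the paper's proof: the splitting of $\sum_i\Delta x\,R_i^\tau$ into the part carrying $p(\tau_i|\taub_i)(\taub_{i+1}-2\taub_i+\taub_{i-1})$ plus the commutator, the estimate of the first part via Lemma \ref{lemma:p} and $\|\Dxx\taub\|_{L^\infty(Q_T)}$, and the discrete summation by parts turning the commutator into $\tfrac{\lambda}{2}\sum_i(\pi_{i+1}-\pi_i)(e_{i+1}-e_i)$ are all exactly the paper's steps. The divergence, and the gap, lies in how you then expand $\pi_{i+1}-\pi_i$. Your expansion isolates the term
\[
\frac{\lambda}{2}\int_0^t\sum_{i\in\Z}\bigl(p(\tau_{i+1}|\taub_{i+1})-p(\tau_i|\taub_i)\bigr)(e_{i+1}-e_i)\,ds ,
\]
and your plan to ``absorb'' it into the dissipative term fails. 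By Lemma \ref{lemma:p} the relative pressures are quadratic in the errors, so this term is genuinely \emph{cubic} in $e$: any direct estimate of it yields either a factor $\sum_i e_i^2$ with \emph{no} compensating $\Delta x$ (and $\sum_i e_i^2\le C\phi^\varepsilon/\Delta x$, producing a Gr\"onwall coefficient of order $1/\Delta x$ that ruins the uniform-in-$\Delta x$ estimate \eqref{eq:Ritau}), or a term of the form $\sum_i(|e_i|+|e_{i+1}|)(e_{i+1}-e_i)^2$. Absorbing the latter into your dissipative term, which is at most $-C_0\lambda\sum_i(e_{i+1}-e_i)^2$ with $C_0$ proportional to $m$, requires $D\sup_i|e_i|$ to be \emph{small} compared with $m$ ($D$ the Lipschitz constant of $p'$). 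Uniform boundedness of the errors --- the ``confinement to a compact set'' you invoke --- is not enough, smallness is needed; and no such smallness is among the hypotheses of the lemma, nor could it be assumed without circularity, since controlling $\tau_i-\taub_i$ is precisely what the relative-entropy argument is meant to deliver.

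The paper avoids this obstruction by never introducing the relative-pressure increments at all. After the same summation by parts it writes $\pi_{i+1}-\pi_i=a_i(\tau_{i+1}-\tau_i)-b_i(\taub_{i+1}-\taub_i)$ with the difference quotients $a_i=\frac{p(\tau_{i+1})-p(\tau_i)}{\tau_{i+1}-\tau_i}$ and $b_i=\frac{p(\taub_{i+1})-p(\taub_i)}{\taub_{i+1}-\taub_i}$. The coefficient of $(e_{i+1}-e_i)^2$ is then $a_i$ itself, which is nonpositive by the monotonicity of $p$ in \eqref{hyp_p}; it is simply discarded, and no absorption is ever needed. The remaining cross term carries the product $(a_i-b_i)(\taub_{i+1}-\taub_i)$, where $|a_i-b_i|\le\frac{D}{2}(|e_i|+|e_{i+1}|)$ by Lipschitz continuity of $p'$ and $|\taub_{i+1}-\taub_i|\le\Delta x\,\|D_x\taub\|_{L^\infty(Q_T)}$; bounding $|e_{i+1}-e_i|\le|e_i|+|e_{i+1}|$ then keeps everything quadratic in the error with exactly the $\Delta x$ weight needed to conclude through $\phi^\varepsilon$. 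In fact, your own pieces can be repaired only by recombining them: your ``dissipative'' term $\tfrac{\lambda}{2}\sum_i p'(\taub_{i+1})(e_{i+1}-e_i)^2$ plus the dangerous part of the cubic term reconstitute $\tfrac{\lambda}{2}\sum_i a_i(e_{i+1}-e_i)^2\le 0$ --- which is the paper's argument. Estimated separately, as you propose, the pieces do not close.
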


Equipped with these three technical lemmas, we now establish our main result.
\begin{proof}[Proof of Theorem \ref{thm:2}]
Arguing Lemma \ref{lem2}, we evaluate the function $\phi^\varepsilon$
by a discrete integration in space of the equation \eqref{eq:dtERD}
and next, an integration in time over $[0,t)$. Since the limit
assumptions \eqref{eq:CI_dis} hold,
the relative entropy flux tends to 0 when $i \to \pm \infty$.
As a consequence, a straightforward computation gives
\begin{equation}
  \label{eq:phied}
  \begin{aligned}
    \phi^\varepsilon(t) -\phi^\varepsilon(0)&= -\sigma \int_0^t \sumi
    \Delta x\; (u_i - \bar u_i)^2 (s) ds \\
    &+ \dfrac 1 \sigma \int_0^t \sumi \Delta x\; \left(\dfrac{p(\bar
        \tau_{i+2}) - 2p(\bar \tau_i) + p (\bar \tau_{i-2}) }{(2\Delta
        x)^2}
      p(\tau_i|\bar\tau_i)\right) (s) ds\\
    &+ \dfrac{\varepsilon^2}{\sigma}\int_0^t \sumi \Delta x\; \left((u_i -
    \bar u_i) \dfrac{d}{dt} \left( \dfrac{p(\bar \tau_{i+1}) - p(\bar
        \tau_{i-1}) }{2 \Delta x} \right)\right) (s) ds\\
  &+ \int_0^t \sumi \Delta x\; ( R_i^u + R_i^\tau) (s) ds.
  \end{aligned}
\end{equation}
Now, we evaluate each term involved within the right-hand side. Let us
note that the second and third terms of \eqref{eq:phied} are nothing
but the discrete counterparts of the second and third terms in
\eqref{thrm-continu-1}. 

Concerning the second term of \eqref{eq:phied}, from the definition
\eqref{normeLinfDxxv} of $\|\tDxx p(\taub)\|_{L^{\infty}(Q_{T})}$ and
Lemma \ref{lemma:p}, the following estimation holds:
\begin{equation*}
\begin{aligned}
 \dfrac{1}{\sigma} \int_0^t \sumi \Delta x\; \left|\dfrac{p(\bar
          \tau_{i+2}) - 2p(\bar \tau_i) + p (\bar \tau_{i-2})
        }{(2\Delta x)^2} p(\tau_i|\bar\tau_i)\right|& (s) ds
      \leq \\ & \hspace*{-20mm}
      -\frac{C}{\sigma}\|\tDxx
      p(\taub)\|_{L^{\infty}(Q_{T})}\int_{0}^{t}\sumi\Delta x\,
      P(\tau_{i}|\taub_{i})(s)ds.
\end{aligned}
\end{equation*}
Because of definition \eqref{eq:ERD}, we have $-P(\tau_i|\taub_i)\le
\eta_i^\varepsilon$. As a consequence, by definition of
$\phi^\varepsilon$ given by \eqref{eq:phieddef}, we immediately obtain
\begin{gather}
 \dfrac{1}{\sigma} \int_0^t \sumi \Delta x\; \left|\dfrac{p(\bar
          \tau_{i+2}) - 2p(\bar \tau_i) + p (\bar \tau_{i-2})
        }{(2\Delta x)^2} p(\tau_i|\bar\tau_i)\right| (s) ds
      \leq %\nonumber\\
      \frac{C}{\sigma}\|\tDxx p(\taub)\|_{L^{\infty}(Q_{T})}\int_{0}^{t}\phi^{\varepsilon}(s)ds.\label{eq:estimate_dxxp}
\end{gather}

Concerning the third term in \eqref{eq:phied}, we use the
Cauchy-Schwarz and Young's inequalities to get
 \begin{equation*}
    \begin{aligned}
      \dfrac{\varepsilon^2}{\sigma}\int_0^t &\sumi \Delta x\; \left|
        (u_i - \bar u_i) \dfrac{d}{dt} \left( \dfrac{p(\bar
            \tau_{i+1}) - p(\bar \tau_{i-1}) }{2 \Delta x}(s)
        \right)\right|  ds\\
      &\leq \dfrac{\sigma}{2} \int_0^t \sumi \Delta x\;
      |u_i -\bar u_i|^2 (s) ds %\\&
      + \dfrac{\varepsilon^4}{2\sigma^3}\int_0^t \sumi \Delta x\; \left| 
          \dfrac{d}{dt} \left( \dfrac{p(\bar
              \tau_{i+1}) - p(\bar \tau_{i-1}) }{2 \Delta x}(s)
        \right)\right|^2 ds.
    \end{aligned}
  \end{equation*}
Involving the definition \eqref{normeL2Dtxv} of $\|\tDtx
p(\taub)\|_{L^{2}(Q_{T})}$, the following estimation holds:
\begin{equation}
    \label{eq:estimate_dtxp}
    \begin{aligned}
      \dfrac{\varepsilon^2}{\sigma}\int_0^t \sumi \Delta x\;
      &\left| (u_i - \bar u_i) \dfrac{d}{dt} \left( \dfrac{p(\bar
            \tau_{i+1}) - p(\bar \tau_{i-1}) }{2 \Delta x}
        \right)\right| (s) ds\\
      &\leq \dfrac{\sigma}{2} \int_0^t \sum_{i
        \in \mathbb Z} \Delta x\; (u_i-\bar u_i)^2 ds +
      \dfrac{\varepsilon^4}{2\sigma^3}\|\tDtx p(\bar
      \tau)\|_{L^2(Q_T)}^2.
    \end{aligned}
  \end{equation}

Now, the control of the numerical error terms $R_i^u$ and $R_i^\tau$ is established in Lemma
\ref{lem2} and Lemma \ref{lem3}, in order to have the estimations of
the last term in \eqref{eq:phied}. Accounting on the estimations
\eqref{eq:Riu}, \eqref{eq:Ritau}, \eqref{eq:estimate_dxxp} and
\eqref{eq:estimate_dtxp}, from the relation \eqref{eq:phied} we write 
\begin{equation}
  \label{eq:phied2}
    \begin{aligned}
    \phi^\varepsilon (t) \leq \phi^\varepsilon(0)& 
        + \left(\frac{\lambda\theta}{2}-\dfrac{\sigma}{2}\right)
        \int_0^t \sum_{i \in \mathbb Z} \Delta x\; (u_i- \bar
        u_i)^2 (s) ds\\
        &
        +\left(\dfrac{1}{2\sigma^3}\|\tDtx p(\bar \tau)\|_{L^2(Q_T)}^2 +
        \dfrac{\lambda\,\Delta x}{2\theta}\|\Dxx \bar
        u\|_{L^2(Q_T)}^2\right)\varepsilon^4\\
        &+\left( C\lambda\,\Delta x\|\Dxx \bar \tau\|_{L^\infty(Q_T)} + 
          C \lambda\|D_{x} \bar \tau\|_{L^\infty(Q_T)}+
          \dfrac{C}{\sigma } \|\tDxx p(\bar \tau)\|_{L^\infty(Q_T)}\right)\int_0^t
        \phi^\varepsilon(s) ds.
  \end{aligned}
\end{equation}
Let us fix $\theta\le\dfrac \sigma \lambda$ such that 
$\dfrac{\lambda\theta}{2}-\dfrac \sigma 2 \le 0$. Then we get 
\begin{equation}
  \label{eq:phied3}
  \begin{aligned}
    \phi^\varepsilon (t) &\leq \phi^\varepsilon(0) +
    \left(\dfrac{1}{2\sigma^3}\|\tDtx p(\bar \tau)\|_{L^2(Q_T)}^2 +
        \dfrac{\lambda\,\Delta x}{2\theta}\|\Dxx \bar
        u\|_{L^2(Q_T)}^2\right)\varepsilon^4\\ 
    &+ \left( \dfrac{C}{\sigma } \|\tDxx p(\bar
      \tau)\|_{L^\infty(Q_T)} +\lambda C\,\Delta x\|\Dxx \bar
      \tau\|_{L^\infty(Q_T)} + \dfrac{\lambda C}{2} \|D_{x} \bar
      \tau\|_{L^\infty(Q_T)}\right) \int_0^t \phi^\varepsilon(s) ds.
  \end{aligned}
\end{equation}
The expected estimation \eqref{eq:rateD} is a direct consequence of
the Gr\"onwall Lemma. The proof is thus completed.
\end{proof}

To conclude this section, we now give the proofs of the three
intermediate results.

\begin{proof}[Proof of Lemma \ref{lem1}]
From \eqref{eq:ERD}, the derivative with respect to time of the relative
entropy $\eta_i^\varepsilon$ reads
  \begin{equation}
    \label{eq:ERD1}
    \dfrac{d}{dt}\eta_i^\varepsilon = \varepsilon^2 (u_i -\bar u_i)
    \dfrac{d}{dt}(u_i-\bar u_i) -(p(\tau_i) - p(\bar \tau_i))\dfrac{d}{dt} \tau_i +
    (\tau_i-\bar \tau_i)p'(\bar\tau_i) \dfrac{d}{dt} \bar \tau_i .
  \end{equation}
Now, let us rewrite the second equation of \eqref{eq:Pd} as follows:
  \begin{equation}
    \label{eq:Pd2}
    \varepsilon^2 \dfrac{d}{dt} \bar u_i = \varepsilon^2 \dfrac{d}{dt}
    \bar u_i -\sigma \bar u_i -\dfrac{1}{2\Delta x}(p(\bar
    \tau_{i+1}) - p(\bar \tau_{i-1})).
  \end{equation}
From (\ref{eq:Hd}), since we have
$$
   \varepsilon^2\dfrac{d}{dt}u_i =\dfrac{\lambda\varepsilon^2}{2\Delta x}(u_{i+1}-2u_i+u_{i-1})
    - \dfrac{1}{2 \Delta x}(p(\tau_{i+1}) -
    p(\tau_{i-1})) -\sigma u_i,
$$
we obtain
$$
\begin{aligned}
\varepsilon^2\dfrac{d}{dt}(u_i-\ub_i) =&
  -\sigma (u_i-\bar u_i)
      -\varepsilon^2 \dfrac{d}{dt}\bar u_i
      - \dfrac{1}{2\Delta x}\Big(
      (p(\tau_{i+1}) - p( \tau_{i-1})) - (p(\bar\tau_{i+1}) - p(\bar
      \tau_{i-1}))\Big)\\
&      +\dfrac{\lambda\varepsilon^2}{2 \Delta x} (u_{i+1} - 2u_i + u_{i-1}).
\end{aligned}
$$
Plugging the above relation into \eqref{eq:ERD1} leads to
  \begin{equation}
    \label{eq:ERD2}
    \begin{aligned}
      \dfrac{d}{dt}\eta_i^\varepsilon = & -\sigma (u_i-\bar u_i)^2
      -\varepsilon^2 (u_i-\bar u_i) \dfrac{d}{dt}\bar u_i \\
      &- (u_i-\bar u_i)\dfrac{1}{2\Delta x}\Big(
      (p(\tau_{i+1}) - p( \tau_{i-1})) - (p(\bar\tau_{i+1}) - p(\bar
      \tau_{i-1}))\Big)\\
      &+\dfrac{\lambda\varepsilon^2}{2 \Delta x}(u_i-\bar u_i)
      (u_{i+1} - 2u_i + u_{i-1})\\
      &-(p(\tau_i) - p(\bar \tau_i)) \dfrac{d}{dt}\tau_i + (\tau_i -
      \bar\tau_i)p'(\bar \tau_i) \dfrac{d}{dt} \bar \tau_i.
    \end{aligned}
  \end{equation}
Next, we substitute $\dfrac{d}{dt} \tau_i$ and $\dfrac{d}{dt} \bar
\tau_i$ by their definitions, given by \eqref{eq:Hd} and \eqref{eq:Pd}, to obtain
  \begin{equation}
    \label{eq:ERD3}
    \begin{aligned}
      \dfrac{d}{dt}\eta_i^\varepsilon = & -\sigma (u_i-\bar u_i)^2
      -\varepsilon^2 (u_i-\bar u_i) \dfrac{d}{dt}\bar u_i \\
      &-\dfrac{1}{2\Delta x} \Big (p(\tau_{i+1}) -
        p(\taub_{i+1})) (u_i- \bar u_i)
      -(p(\tau_{i-1}) - p(\bar\tau_{i-1}))(u_i-\bar u_i)\\
     & \hspace*{12mm}+ (p(\tau_i) - p(\bar \tau_i)) 
      (u_{i+1} - u_{i-1})-(\tau_i- \bar \tau_i)p'(\bar
      \tau_i) (\bar u_{i+1} - \bar
      u_{i-1})\Big)\\
      &+ \dfrac{\lambda\varepsilon^2}{2 \Delta x}(u_i-\bar
      u_i)(u_{i+1} - 2u_i-u_{i-1})\\
      &-\dfrac{\lambda}{2\Delta x} \Big( (p(\tau_i) - p(\bar
      \tau_i)) (\tau_{i+1} -2\tau_i + \tau_{i-1})
    - (\tau_i-\bar
      \tau_i)p'(\bar \tau_i) (\bar \tau_{i+1} - 2\bar \tau_i + \bar
      \tau_{i-1})\Big),
    \end{aligned}
  \end{equation}
Let us remark that the two last terms are respectively the numerical error terms
  $R_i^u$ and $R_i^\tau$ defined in \eqref{eq:Ri}. Moreover, by
definition of $p(\tau_{i}|\bar \tau_{i})$, given by
(\ref{def-p-Tau-Taub}), the above relation rewrites as follows:
  \begin{equation}
    \label{eq:Q1}
    \begin{aligned}
      \dfrac{d}{dt}\eta_i^\varepsilon = & -\sigma (u_i-\bar u_i)^2
      -\varepsilon^2 (u_i-\bar u_i) \dfrac{d}{dt}\bar u_i \\
      &-\dfrac{1}{2\Delta x} \Big(
        (\bar u_{i+1} - \bar u_{i-1}) p(\tau_i |\bar \tau_i) 
        + (u_i -\bar u_i) (p(\tau_{i+1}) - p(\bar \tau_{i+1})) \\
        &\hspace*{12mm}+ (p(\tau_i) - p(\bar \tau_i)) (u_{i+1} - \bar u_{i+1})
        -  (u_{i-1} -\bar u_{i-1}) (p(\tau_i) - p(\bar \tau_i))\\
        &\hspace*{12mm}- (u_i -\bar u_i) (p(\tau_{i-1}) - p(\bar \tau_{i-1}))
      \Big)\\
      &+ R_i^u + R_i^\tau,
    \end{aligned}
  \end{equation}
Adopting the definition \eqref{eq:psiD}
  of the discrete relative entropy flux $\psi_{i+1/2}$, we directly obtain
  \begin{equation}
    \label{eq:ERD4}
    \begin{aligned}
      \dfrac{d}{dt}\eta_i^\varepsilon = & -\sigma (u_i-\bar u_i)^2
      -\varepsilon^2 (u_i-\bar u_i) \dfrac{d}{dt}\bar u_i \\
      &-\dfrac{1}{\Delta x}(\psi_{i+1/2} - \psi_{i-1/2})\\
      &- \dfrac{1}{2 \Delta x}(\bar u_{i+1}-\bar u_{i-1})p
      (\tau_i|\bar \tau_i)\\
      &+R_i^u + R_i^\tau.
    \end{aligned}
  \end{equation}
Finally, from the scheme definition \eqref{eq:Pd}, we deduce the
following two relations:
  \begin{equation*}
    \begin{aligned}
 &     \dfrac{d}{dt} \bar u_i = -\dfrac{1}{2\sigma\Delta x}
      \dfrac{d}{dt} (p(\bar \tau_{i+1}) - p(\bar \tau_{i-1})),\\
 &     \bar u_{i+1} - \bar u_{i-1} = -\dfrac{1}{2\sigma\Delta x}
      \left( p(\bar \tau_{i+2}) - 2p(\bar \tau_i) + p (\bar
        \tau_{i-2}) \right),
    \end{aligned}
  \end{equation*}
to recover the expected evolution law (\ref{eq:dtERD}). The proof is
thus achieved.
\end{proof}

\begin{proof}[Proof of Lemma \ref{lem2}]
Because of the definition \eqref{eq:Ri} of the residual $R_i^u$, we have
  \begin{equation}
    \label{eq:Riu1}
    \int_0^t \sumi \Delta x\; R_i^u (s) ds =
    \dfrac{\varepsilon^2\lambda}{2}
    \int_0^t \sumi (u_{i+1} -2u_i + u_{i-1}) (u_i -\bar u_i) (s) ds,
  \end{equation}
which equivalently rewrites as follows:
  \begin{equation}
    \label{eq:Riu2}
    \begin{aligned}
      \int_0^t \sumi \Delta x\; R_i^u (s) ds =&
      \dfrac{\varepsilon^2\lambda}{2} \int_0^t \sumi \Big( \bar
        u_{i+1} -2\bar u_i + \bar u_{i-1}\Big)(u_i -\bar u_i) (s)ds\\
&+
      \dfrac{\varepsilon^2\lambda}{2} \int_0^t \sumi
      \Big((u_{i+1}-\bar u_{i+1}) -2(u_i-\bar u_i) +
        (u_{i-1}-\bar u_{i-1}))\Big) (u_i -\bar u_i) (s) ds.
    \end{aligned}
  \end{equation}
Since $u_i$ and $\ub_i$ satisfy the assumption limit
(\ref{eq:CI_dis}), we immediately have
$$
\begin{aligned}
\sumi
      \Big((u_{i+1}-\bar u_{i+1}) -2(u_i-\bar u_i) +
        (u_{i-1}-\bar u_{i-1}))\Big)& (u_i -\bar u_i)
=\\
& -\sumi
\Big( (u_{i+1}-\ub_{i+1}) - (u_i-\ub_i) \Big)^2.
\end{aligned}
$$
As a consequence, we obtain the following inequality:
$$
      \int_0^t \sumi \Delta x\; R_i^u (s) ds \le
      \dfrac{\varepsilon^2\lambda}{2} \int_0^t \sumi \Big( \bar
        u_{i+1} -2\bar u_i + \bar u_{i-1}\Big)(u_i -\bar u_i) (s)ds,
$$
which rewrites
  \begin{equation}
    \label{eq:Riu4}
    \int_0^t \sumi \Delta x\; R_i^u (s) ds \leq
    \dfrac{\varepsilon^2\lambda\,\Delta x}{2}
    \int_0^t \sumi \sqrt{\Delta x} \;
    \dfrac{\bar u_{i+1} -2\bar u_i + \bar u_{i-1}}{(\Delta x)^{2}}\sqrt{\Delta x}
    (u_i - \bar u_i)ds.
  \end{equation}
 Combining again Cauchy-Schwarz and Young's inequalities gives, for
 all $\theta>0$,
  \begin{equation}
    \label{eq:Riu5}
    \begin{aligned}
      \int_0^t \sumi \Delta x\; R_i^u (s) ds &\leq \dfrac{\varepsilon^4
        \lambda\,\Delta x}{2\theta} \int_0^t \sumi \Delta x \; \left(
        \dfrac{\bar u_{i+1} -2\bar u_i + \bar u_{i-1}}{(\Delta x)^{2}}
      \right)^2 ds \\
      &+ \dfrac{\lambda \theta}{2} \int_0^t \sumi \Delta
      x\; (u_i -\bar u_i)^2 ds.
    \end{aligned}
  \end{equation}
  Finally, the definition \eqref{normeL2Dxxv} of $\|\Dxx
  \ub\|_{L^{2}(Q_{T})}$ leads to the required inequality (\ref{eq:Riu}).
\end{proof}

\begin{proof}[Proof of Lemma \ref{lem3}]
First, arguing the definition of $p(\tau_i|\bar \tau_i)$, given by
(\ref{def-p-Tau-Taub}), a straightforward computation leads to the
following reformulation of $R_i^\tau$:
  \begin{equation}
    \label{eq:Ritau1}
    \begin{aligned}
      R_i^\tau =& -\dfrac{\lambda}{2\Delta x} \Big( p(\tau_i |\bar
        \tau_i) (\bar \tau_{i+1} -2\bar\tau_i + \bar \tau_{i-1}) \Big)\\
        &+\dfrac{\lambda}{2\Delta x} 
              \Big((p(\tau_i) - p(\bar \tau_i)) \left( (\tau_{i+1} - \bar
          \tau_{i+1}) - 2(\tau_i - \bar \tau_i) + (\tau_{i-1} - \bar
          \tau_{i-1}) \right) \Big),
    \end{aligned}
  \end{equation}
to get
\begin{equation}
    \label{eq:Ritau2}
      \int_0^t \sumi \Delta x\; R_i^\tau ds= T_1 +T_2,
  \end{equation}
  where we have set
  \begin{eqnarray}
    \label{eq:T}
    &&T_1 = -\dfrac{\lambda}{2} \int_0^t \sumi p(\tau_i | \bar \tau_i)
    (\bar\tau_{i+1} - 2 \bar \tau_i + \bar \tau_{i-1}) ds,\\
    &&T_2 = -\dfrac{\lambda}{2} \int_0^t \sumi (p(\tau_i) - p(\bar
    \tau_i))
    \Big( (\tau_{i+1} - \bar \tau_{i+1}) -2(\tau_i - \bar \tau_i)
      +(\tau_{i-1}-\bar \tau_{i-1}) \Big) ds.
  \end{eqnarray}
We first estimate $T_1$. Thanks to Lemma \ref{lemma:p}, we write
  \begin{equation}
    \label{eq:T11}
    T_1 \leq -\frac{\Delta x\,\lambda C}{2}\int_0^t \sumi \Delta x\;
    P(\tau_i | \bar \tau_i)
   \left| \dfrac{\bar\tau_{i+1} - 2 \bar \tau_i + \bar \tau_{i-1}}{ (\Delta x)^{2}}\right| ds.
  \end{equation}
Since we have $-P(\tau_i|\taub_i)\le \eta_i^\varepsilon$ and
$\|\Dxx\taub\|_{L^{\infty}(Q_{T})}$ is bounded, we easily obtain
  \begin{equation}
    \label{eq:T12}
    T_1 \leq \Delta x\,\lambda C \|\Dxx\bar \tau\|_{L^\infty(Q_T)}\int_0^t
    \phi^\varepsilon(s) ds.
  \end{equation}

  Now, let focus on $T_2$.
  By a discrete integration by parts, we directly get
  \begin{equation*}
      T_2 = \dfrac{\lambda}{2}\int_0^t \sumi
      \Big((p(\tau_{i+1})-p(\bar \tau_{i+1})) - (p(\tau_{i})-p(\bar
        \tau_{i})) \Big) \Big((\tau_{i+1}-\bar \tau_{i+1}) -
        (\tau_{i}-\bar
        \tau_{i}) \Big),
\end{equation*}
to write
\begin{equation*}
\begin{aligned}
     T_2 = &\dfrac{\lambda}{2}\int_0^t \sumi
      (p(\tau_{i+1})-p( \tau_{i})) 
      \Big((\tau_{i+1}-\bar \tau_{i+1}) -
        (\tau_{i}-\bar
        \tau_{i}) \Big)\\
      &-\dfrac{\lambda}{2}\int_0^t \sumi
      (p(\bar\tau_{i+1})-p(\bar
        \tau_{i})) \Big((\tau_{i+1}-\bar \tau_{i+1}) -
        (\tau_{i}-\bar
        \tau_{i}) \Big).
    \end{aligned}
  \end{equation*}
With some abuse in the notations, we introduce
\begin{equation*}
\frac{p(\tau_{i+1})-p(\tau_{i})}{\tau_{i+1}-\tau_{i}}(\tau_{i+1}-\tau_{i})=\left\{\begin{array}{ll}
p(\tau_{i+1})-p(\tau_{i}) &\text{ if }  \tau_{i+1}-\tau_{i}\neq 0,\\
0& \text{ otherwise,}
\end{array}\right.
\end{equation*}
to rewrite $T_2$ as follows: 
  \begin{equation}
    \label{eq:T22}
    \begin{aligned}
      T_2
      =& \dfrac{\lambda}{2}\int_0^t \sumi
      \dfrac{p(\tau_{i+1})-p( \tau_{i})}{\tau_{i+1}-\tau_i}
      \Big((\tau_{i+1}-\bar \tau_{i+1}) -
        (\tau_{i}-\bar
        \tau_{i}) \Big)(\tau_{i+1}-\tau_i)ds\\
      &-\dfrac{\lambda}{2}\int_0^t \sumi
      \dfrac{p(\bar\tau_{i+1})-p(\bar
        \tau_{i})}{\bar \tau_{i+1} -\bar \tau_i}
      \Big((\tau_{i+1}-\bar \tau_{i+1}) -
        (\tau_{i}-\bar
        \tau_{i}) \Big)(\bar \tau_{i+1} -\bar \tau_i)ds.
    \end{aligned}
  \end{equation}
We notice that
  \begin{equation*}
    \begin{aligned}
      \Big((\tau_{i+1}-\bar \tau_{i+1}) - (\tau_{i}-\bar \tau_{i})
      \Big)(\tau_{i+1}-\tau_i) 
      =&\Big( (\tau_{i+1} -\bar \tau_{i+1})
        - (\tau_i - \bar \tau_i)\Big)^2 \\
      &+(\bar \tau_{i+1} - \bar
      \tau_i) \Big( (\tau_{i+1} -\bar \tau_{i+1}) - (\tau_i - \bar
        \tau_i)\Big),
    \end{aligned}
  \end{equation*}
so that $T_2$ now reads
    \begin{equation}
    \label{eq:T23}
    \begin{aligned}
      T_2
      = &\dfrac{\lambda}{2}\int_0^t \sumi
      \dfrac{p(\tau_{i+1})-p( \tau_{i})}{\tau_{i+1}-\tau_i}
      \Big( (\tau_{i+1} -\bar \tau_{i+1})
        - (\tau_i - \bar \tau_i)\Big)^2ds\\
      &+\dfrac{\lambda}{2}\int_0^t \sumi
      \dfrac{p(\tau_{i+1})-p( \tau_{i})}{\tau_{i+1}-\tau_i}
      (\bar \tau_{i+1} - \bar
      \tau_i) \Big( (\tau_{i+1} -\bar \tau_{i+1}) - (\tau_i - \bar
        \tau_i)\Big)\\
       &-\dfrac{\lambda}{2}\int_0^t \sumi
      \dfrac{p(\bar\tau_{i+1})-p(\bar
        \tau_{i})}{\bar \tau_{i+1} -\bar \tau_i}
      \Big((\tau_{i+1}-\bar \tau_{i+1}) -
        (\tau_{i}-\bar
        \tau_{i}) \Big)(\bar \tau_{i+1} -\bar \tau_i)ds.
    \end{aligned}
  \end{equation}
According to the assumption \eqref{hyp_p}, the pressure $p$ is a
decreasing function of $\tau$. As a consequence, the first term of
\eqref{eq:T23} is nonpositive. Hence we obtain
  \begin{equation}
    \label{eq:T24}
    T_2 \leq \dfrac{\lambda}{2}\int_0^t \sumi
    \left( 
      \dfrac{p(\tau_{i+1})-p( \tau_{i})}{\tau_{i+1}-\tau_i}
      -
      \dfrac{p(\bar\tau_{i+1})-p(\bar
        \tau_{i})}{\bar \tau_{i+1} -\bar \tau_i}
    \right)
    \Big((\tau_{i+1}-\bar \tau_{i+1}) -
        (\tau_{i}-\bar
        \tau_{i}) \Big)(\bar \tau_{i+1} -\bar \tau_i)ds.
  \end{equation}
  Under the assumption \eqref{eq:normD2} on
  $\|D_{x}\taub\|_{L^{\infty}(Q_{T})}$, the above relation becomes
  \begin{equation}
    \label{eq:T25}
    \begin{aligned}
      T_2 \leq \dfrac{\lambda}{2} \|D_x \bar
      \tau\|_{L^\infty(Q_T)}\int_0^t \sumi \Delta x\; &\left|
        \dfrac{p(\tau_{i+1})-p( \tau_{i})}{\tau_{i+1}-\tau_i} -
        \dfrac{p(\bar\tau_{i+1})-p(\bar \tau_{i})}{\bar \tau_{i+1}
          -\bar \tau_i} \right| \\
      &\times\left|(\tau_{i+1}-\bar \tau_{i+1}) -
        (\tau_{i}-\bar \tau_{i}) \right|ds.
    \end{aligned}
  \end{equation}
Now, let us emphasize that we have
  \begin{equation*}
    \left| 
    \dfrac{p(\tau_{i+1})-p( \tau_{i})}{\tau_{i+1}-\tau_i}
      -
      \dfrac{p(\bar\tau_{i+1})-p(\bar
        \tau_{i})}{\bar \tau_{i+1} -\bar \tau_i}
    \right| 
    \leq \int_0^1 \Big|p'(\tau_i + z(\tau_{i+1}-\tau_i)) - p'(\bar \tau_i
    + z (\bar \tau_{i+1} - \bar \tau_i)))\Big|dz.
  \end{equation*}
  Since $p\in\mathcal{C}^{2}(\mathbb{R}_{+}^{*})$, the function
  $p'$ is Lipschitz continuous with a Lipschitz constant $D$. Then the
  following sequence of inequalities holds:
  \begin{equation*}
    \begin{aligned}
      \left| \dfrac{p(\tau_{i+1})-p( \tau_{i})}{\tau_{i+1}-\tau_i} -
        \dfrac{p(\bar\tau_{i+1})-p(\bar \tau_{i})}{\bar \tau_{i+1}
          -\bar \tau_i} \right| 
      &\leq D\int_0^1 \Big| (\tau_i +
         z(\tau_{i+1} - \tau_i ))-(\bar
         \tau_i + z (\bar \tau_{i+1} - \bar \tau_i) \Big| dz,\\
       &\leq D\int_0^1 \Big((1-z)|\tau_i-\bar \tau_i| + z|\tau_{i+1}
       -\bar\tau_{i+1}|\Big)dz,\\
       &\leq \dfrac{D}{2} \left( |\tau_i-\bar \tau_i|+ 
         |\tau_{i+1}-\bar\tau_{i+1}| \right).
    \end{aligned}
  \end{equation*}
  Plugging this estimation into \eqref{eq:T25} gives
    \begin{equation*}
    \begin{aligned}
      T_2 &\leq \dfrac{\lambda}{2} \dfrac{D}{2} \|D_x \bar
      \tau\|_{L^\infty(Q_T)}\int_0^t \sumi \Delta x\;
      \left(|\tau_{i+1}-\bar \tau_{i+1})|+ |\tau_{i}-\bar
        \tau_{i})| \right)^{2}ds,\\
      &\leq {\lambda D} \|D_x \bar
      \tau\|_{L^\infty(Q_T)}\int_0^t \sumi \Delta x\;
      |\tau_{i}-\bar
        \tau_{i}|^2ds.
\end{aligned}
\end{equation*}
By Lemma \ref{lemma:p}, there exists a positive constant $C$ such that
$|\tau_i-\taub_i|^2\le -CP(\tau_i|\taub_i)\le C\eta_i^\varepsilon$. As
a consequence, there exists a constant, once again denoted $C$, such
that we have 
\begin{equation}
    \label{eq:T26}
       T_2 \leq \lambda C \|D_x \bar
      \tau\|_{L^\infty(Q_T)}\int_0^t \phi^\varepsilon (s) ds,
  \end{equation}
Both inequalities \eqref{eq:T12} and \eqref{eq:T26} complete the
estimation of $R_i^\tau$ and the proof is achieved.
\end{proof}

\section{Numerical illustrations}
In this section, we perform numerical experiments to attest the
relevance of the established convergence rate given by
(\ref{eq:rateD}). To address such an issue, we consider a fully
discrete scheme as proposed by Jin {\it et al.} in \cite{Jin1998}. This
scheme is based on a reformulation of system \eqref{p_sys_eps} as
follows:
\begin{equation*}\left\{\begin{aligned}
& \pa_{t}\tau-\pa_{x}u=0,\\
& \ds{\pa_{t}u+\pa_{x}p(\tau)=-\frac{1}{\varepsilon^{2}}\left(\sigma\,u+(1-\varepsilon^{2})\pa_{x}p(\tau)\right)}.
\end{aligned}\right.
\end{equation*}
Arguing this reformulation, a 2-step splitting technique is
adopted. During the first step, a purely convective and non-stiff
system is considered:
\begin{equation*}\left\{\begin{aligned}
&\pa_{t}\tau-\pa_{x}u=0,\\
&\pa_{t}u+\pa_{x}p(\tau)=0.\end{aligned}\right.
\end{equation*} 
Its solutions are approximated by adopting a classical HLL scheme
\cite{Harten1983}:
\begin{subequations}\label{scheme-JPT}
\begin{align}
\tau_{i}^{n+\frac{1}{2}}=\tau_{i}^{n}-\frac{\Delta t}{\Delta x}\left(\mathcal{F}_{i+\frac{1}{2}}^{\tau}-\mathcal{F}_{i-\frac{1}{2}}^{\tau}\right),\label{scheme-JPT-tau-1}\\
u_{i}^{n+\frac{1}{2}}=u_{i}^{n}-\frac{\Delta t}{\Delta
  x}\left(\mathcal{F}_{i+\frac{1}{2}}^{u}-\mathcal{F}_{i-\frac{1}{2}}^{u}\right),\label{scheme-JPT-u-1}
\end{align}
\end{subequations}
where the numerical fluxes are defined by
\begin{align*}
&\mathcal{F}_{i+\frac{1}{2}}^{\tau}=\frac{1}{2}(-u_{i}^{n}-u_{i+1}^{n})-\frac{\lambda}{2}(\tau_{i+1}^{n}-\tau_{i}^{n}),\\
&\mathcal{F}_{i+\frac{1}{2}}^{u}=\frac{1}{2}(p(\tau_{i}^{n})+p(\tau_{i+1}^{n}))-\frac{\lambda}{2}(u_{i+1}^{n}-u_{i}^{n}).
\end{align*}
It is well known that this scheme is stable under the CFL condition
$\ds{\frac{\Delta t}{\Delta x}\lambda\leq \frac{1}{2}}$, where $\lambda$ is defined by \eqref{defi_lambda}, which does
not depend on $\varepsilon$. Next, the stiff source term is treated
by a second step where the following system is discretized:
\begin{equation*}\left\{\begin{aligned}
&\pa_{t}\tau=0,\\
&\pa_{t}u=-\frac{1}{\varepsilon^{2}}\left(\sigma\,u+(1-\varepsilon^{2})\pa_{x}p(\tau)\right).\end{aligned}\right.
\end{equation*}
During this relaxation step, an implicit method is suggested in order
to obtain unconditional stability:
\begin{align*}
&\tau_{i}^{n+1}=\tau_{i}^{n+\frac{1}{2}},\\
& \frac{u_{i}^{n+1}-u_{i}^{n+\frac{1}{2}}}{\Delta t}=-\frac{1}{\varepsilon^{2}}\left(\sigma\,u_{i}^{n+1}+(1-\varepsilon^{2})\frac{p_{i+\frac{1}{2}}^{n+1}-p_{i-\frac{1}{2}}^{n+1}}{\Delta x}\right).
\end{align*}
As in \cite{Jin1998}, the nodal values are given by the following
centered discretization:
$$p_{i+\frac{1}{2}}^{n+1}=\frac{1}{2}\left(p(\tau_{i}^{n+1})+p(\tau_{i+1}^{n+1})\right).$$
Since $\tau_{i}^{n+1}=\tau_{i}^{n+\frac{1}{2}}$, let us emphasize that
$u_{i}^{n+1}$ can be computed explicitly from
$(\tau_{i}^{n},u_{i}^{n})_{i\in\mathbb{Z}}$. Finally, the relaxation
step can be written as
\begin{subequations}\label{scheme-JPT2}
\begin{align}
&\tau_{i}^{n+1}=\tau_{i}^{n+\frac{1}{2}},\label{scheme-JPT-tau-2}\\
& u_{i}^{n+1}=\left(\frac{\varepsilon^{2}}{\varepsilon^{2}+\sigma\,\Delta t}\right)u_{i}^{n+\frac{1}{2}}-\Delta t\left(\frac{1-\varepsilon2}{\Delta t\,\sigma+\varepsilon^{2}}\right)\frac{p(\tau_{i+1}^{n+\frac{1}{2}})-p(\tau_{i-1}^{n+\frac{1}{2}})}{2\,\Delta x}. \label{scheme-JPT-u-2}
\end{align}
\end{subequations}
We underline that this scheme corresponds to the semi-discrete framework introduced Section \ref{sec:semi-discrete-finite}. Indeed, combining \eqref{scheme-JPT-u-1} and \eqref{scheme-JPT-u-2}, we get
\begin{gather*}
u_{i}^{n+1}=u_{i}^{n}-\frac{\sigma\,\Delta t}{\varepsilon^{2}+\sigma\,\Delta t}u_{i}^{n}-\frac{\Delta t}{2\,\Delta x (\varepsilon^{2}+\Delta t\,\sigma)}\left(p(\tau_{i+1}^{n+1})-p(\tau_{i-1}^{n+1})\right)\\
+\frac{\Delta t\,\lambda}{2 \,\Delta x}\left(\frac{\varepsilon^{2}}{\varepsilon^{2}+\sigma\,\Delta t}\right)(u_{i+1}^{n}-2u_{i}^{n}+u_{i-1}^{n}).
\end{gather*}
Now, we fix $\ds{\overline{\Delta t}=\frac{\Delta
    t\,\varepsilon^{2}}{\varepsilon^{2}+\sigma\,\Delta t}}$, and we
note that this new time increment is consistent with 
$\Delta t$. We immediately remark that we recover \eqref{eq:Hd} as soon
as $\Delta t$ tends to zero.

Next, we consider the scheme (\ref{scheme-JPT})-(\ref{scheme-JPT2}) in
the limit of $\varepsilon$ to zero to approximate the solutions of the
parabolic problem \eqref{p_sys0}. We get the following scheme:
\begin{align*}
&\taub_{i}^{n+1}=\taub_{i}^{n}+\frac{\Delta t}{2\,\Delta x}\left(\ub_{i+1}^{n}-\ub_{i-1}^{n}\right)-\frac{\lambda\,\Delta t}{2\,\Delta x}(\taub_{i+1}^{n}-2\taub_{i}^{n}+\taub_{i-1}^{n}),\\
&\ub_{i}^{n+1}=-\frac{1}{2\,\sigma\,\Delta x}\left(p(\taub_{i+1}^{n+1})-p(\taub_{i-1}^{n+1})\right),
\end{align*}
which is an approximation of \eqref{p_sys0}.

We notice that this scheme is {AP} in the sense of the
definition given in the introduction. Indeed, its limit as
$\varepsilon\rightarrow 0$ is consistent (AC) with the parabolic problem
\eqref{p_sys0}, while its stability condition does not depend on
$\varepsilon$.

Equipped with this scheme, we now perform numerical experiments. We
approximate the solutions on the interval $(-4,4)$, and we consider
zero-flux boundary conditions. The final time of simulation is
$T=10^{-2}$. The friction coefficient is fixed to $\sigma=1$.

Concerning the pressure law, we adopt $p(\tau)=\tau^{-\gamma}$ where
the adiabatic coefficient is fixed to $1.4$.

We compute the approximate solutions of the hyperbolic system
\eqref{p_sys_eps} for different values of $\varepsilon$: $10^{-1}$,
$3.10^{-2}$, $10^{-2}$, $3.10^{-3}$, $10^{-3}$, $3.10^{-4}$,
$10^{-4}$, and different number of cells
$N=100,\,200,\,400,\,1600$. The two following initial data are considered:
\begin{itemize}
\item {Condition 1} (discontinuous):
\begin{equation}\label{CI1}
\tau_{0}(x)=\left\{\begin{array}{lcl} 2 & \text{ if } & x<0, \\ 1 & \text{ if } & x>0,
\end{array}\right.
\end{equation}
\item {Condition 2} (smooth):
\begin{equation}\label{CI2}
\tau_{0}(x)=\exp(-100x^{2})+1.
\end{equation}
\end{itemize}
Here, the initial velocity $u_{0}$ is computed to be compatible with
the discrete diffusive limit in order to avoid an initial layer:
$$ u_{i}^{0}=-\frac{1}{\sigma}\frac{p(\tau_{i+1}^{0})-p(\tau_{i-1}^{0})}{2\Delta x} . $$ 

We display, Figure \ref{fig-psys}, the discrete space integral of
the relative entropy $\phi^{\varepsilon}(T)$ with respect to
$\varepsilon$ in log scale for the $p$-system. We observe that both
for discontinuous and smooth initial condition, and for different
numbers of cells, the decay rate is always in $O(\varepsilon^{4})$,
which is in good agreement with Theorem~\ref{thm:2}. 

A natural extension of this work concerns the Goldstein-Taylor model, which reads
\begin{equation}
\left\{\begin{aligned}
&\pa_{t}\rhoe+\pa_{x}\je=0,\\
&\varepsilon^{2}\pa_{t}\je+\pa_{x}\rhoe=-\sigma\,\je,
\end{aligned}\right. \quad (x,t) \in \R \times \R_{+}.
\end{equation}
This system can be seen as a simplified two velocities kinetic model
in macroscopic variables (see for example \cite{Jin1999,Naldi2000}). In
the diffusion limit $\varepsilon\rightarrow 0$, the Goldstein-Taylor
model coincides with the heat equation given by
\begin{equation}\label{eqPGT}
\left\{\begin{aligned}
&\pa_{t}\rhob-\ds{\frac{1}{\sigma}}\pa_{xx}\rhob=0,\\
&\pa_{x}\rhob=-\sigma\,\jb, 
\end{aligned}\right. \quad (x,t) \in \R \times \R_{+}.
\end{equation}
Concerning this model, a direct adaptation of the numerical scheme
\eqref{scheme-JPT}-\eqref{scheme-JPT2} is suggested. The numerical
results are displayed Figure \ref{fig-tele}. As well as for the
$p$-system case, the convergence rate is also in $O(\varepsilon^4)$
which is in good agreement with convergence results given in
\cite{Lattanzio2013}.\\

\begin{figure}
\centering
\subfigure[Initial data 1]{\includegraphics[width=2.8in]{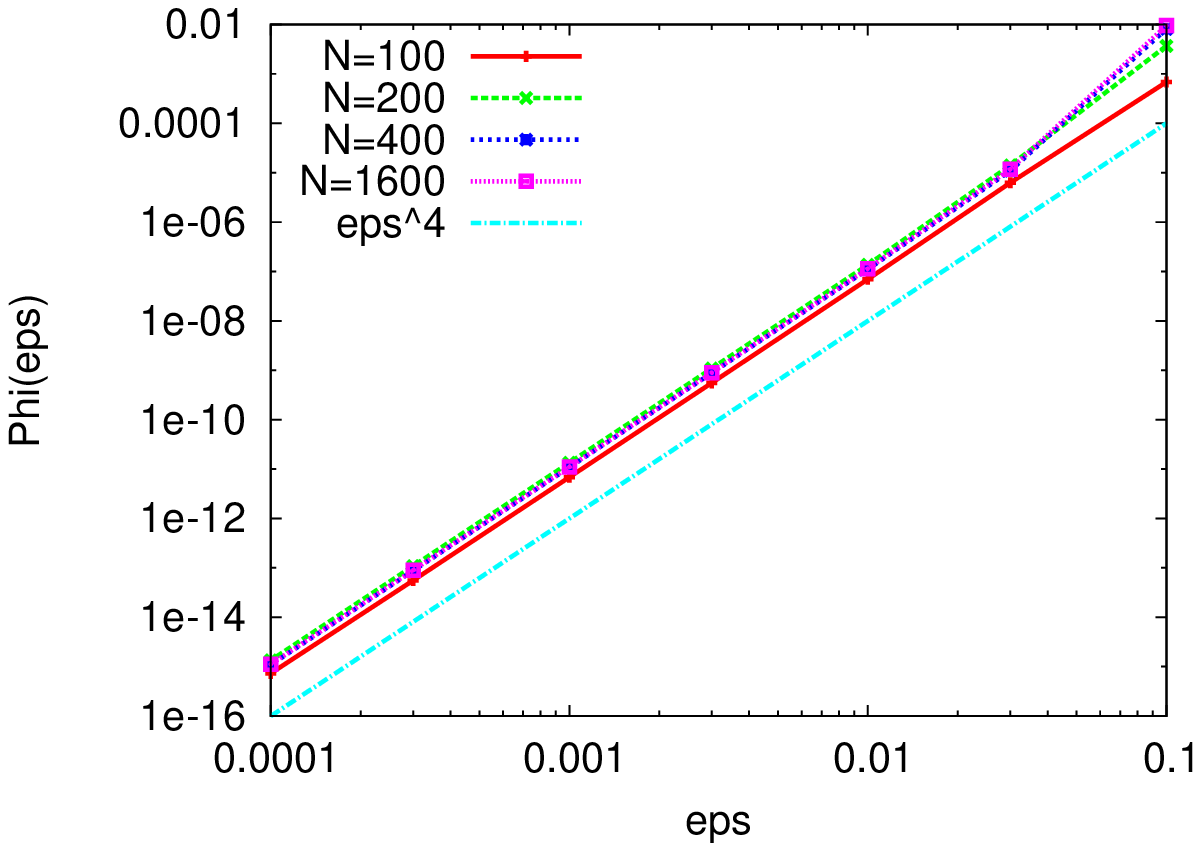}}
\subfigure[Initial data 2]{\includegraphics[width=2.8in]{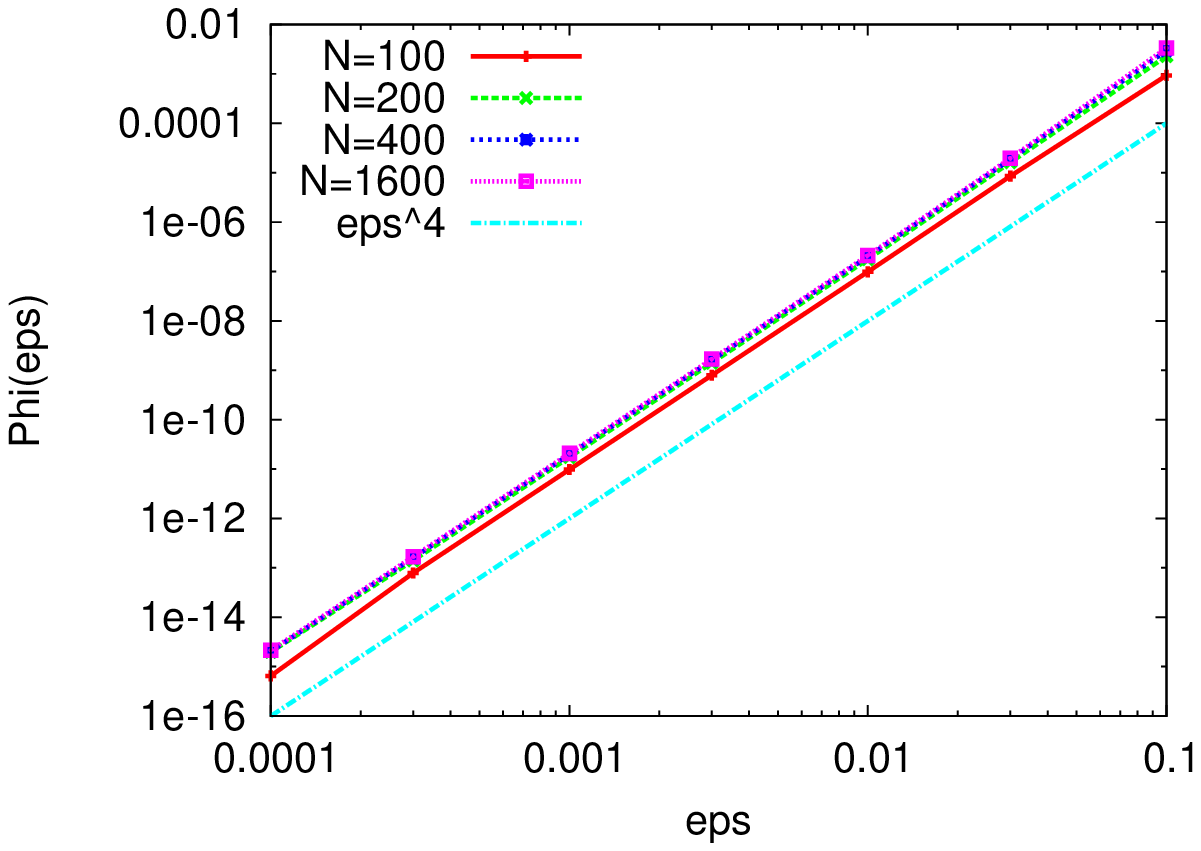}}
\caption{$p$-system: space integral of the relative entropy $\phi^{\varepsilon}$ with respect to $\varepsilon$ in log scale.}
\label{fig-psys}
\end{figure}

\begin{figure}
\centering
\subfigure[Initial data 1]{\includegraphics[width=2.8in]{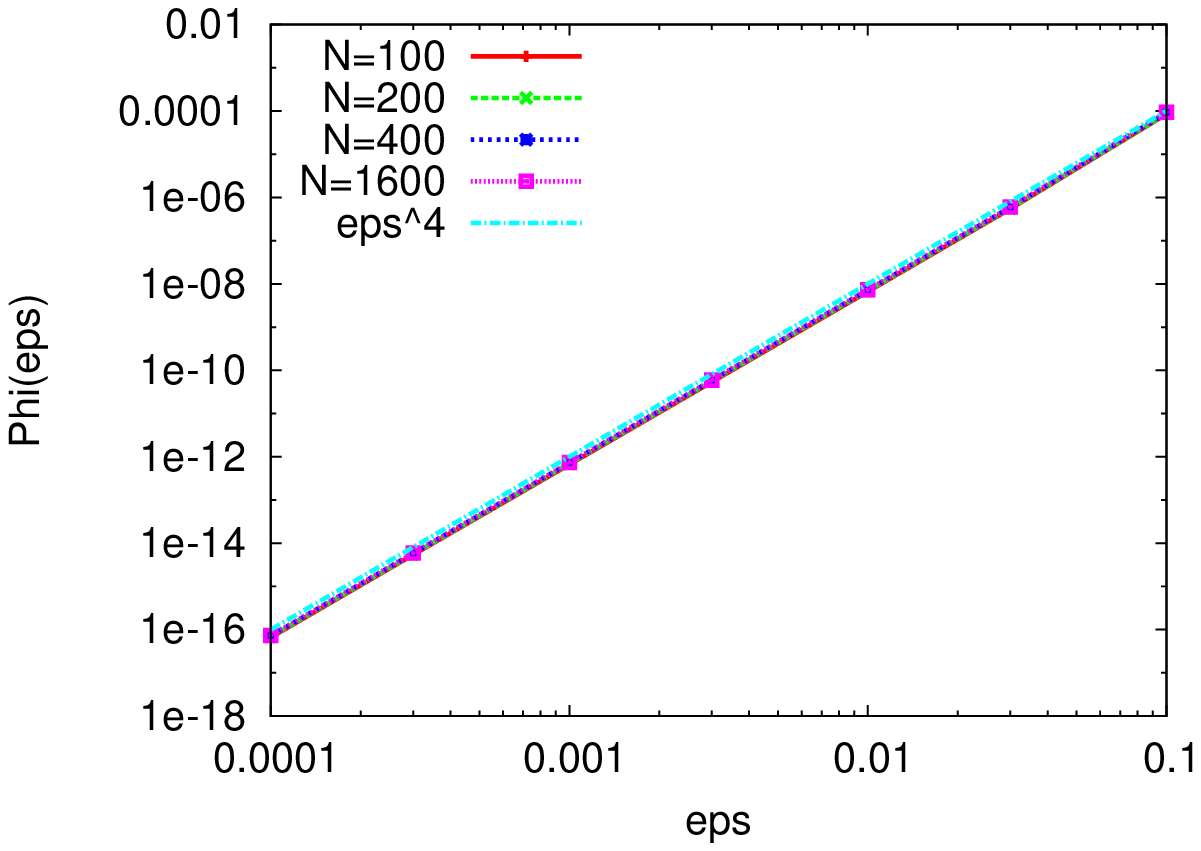}}
\subfigure[Initial data 2]{\includegraphics[width=2.8in]{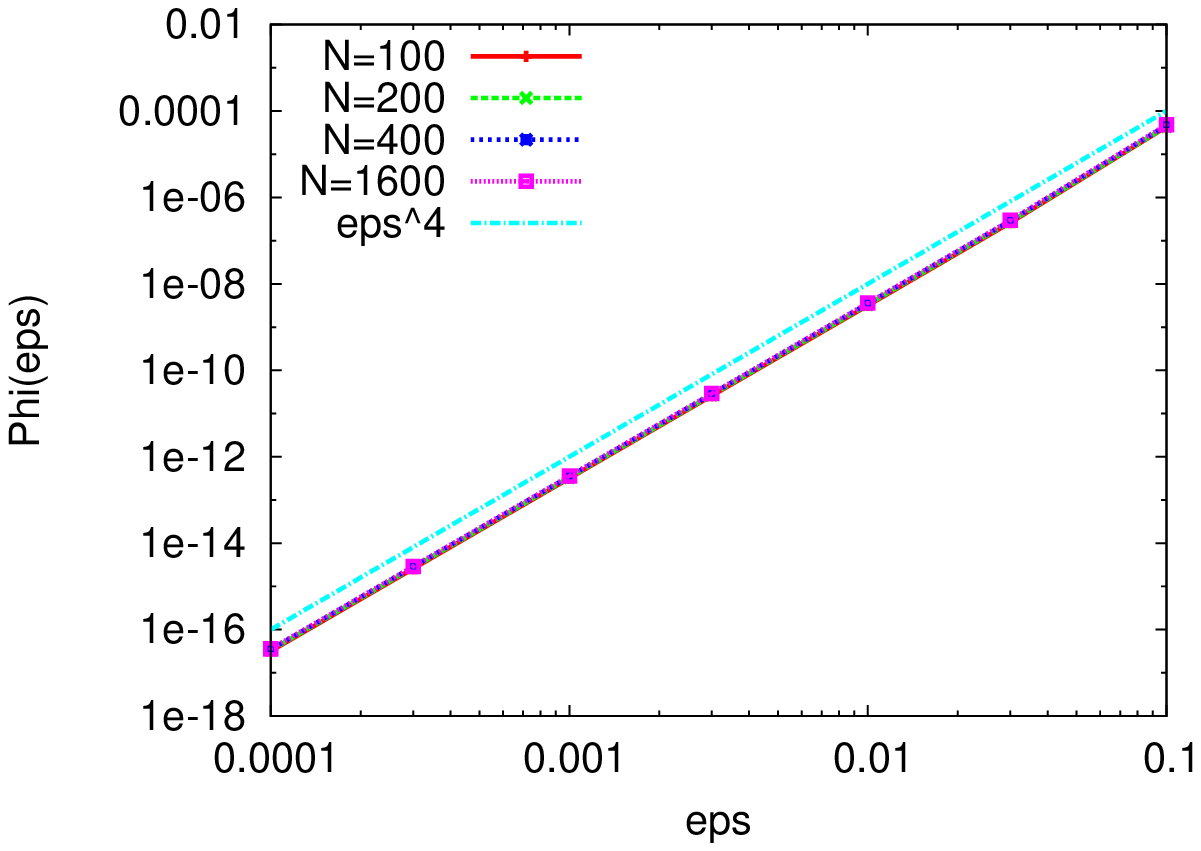}}
\caption{Goldstein-Taylor model: space integral of the relative entropy $\phi^{\varepsilon}$ with respect to $\varepsilon$ in log scale.}
\label{fig-tele}
\end{figure}

In \cite{Lattanzio2013}, the authors also apply their relative entropy method to other systems, leading to the same kind of estimates. To conclude this section, we also extend the numerical scheme \eqref{scheme-JPT}-\eqref{scheme-JPT2} to approximate the weak solutions of both isentropic Euler system and visco-elastic system with memory as considered in \cite{Lattanzio2013}. Concerning the isentropic Euler equation, the adopted scaled system is the following:
\begin{equation*}
\left\{\begin{aligned}
& \partial_{t}\rho^{\varepsilon}+\pa_{x}(\rho^{\varepsilon} u^{\varepsilon})=0,\\
& \partial_{t}(\rho^{\varepsilon} u^{\varepsilon})+\partial_{x}\left(\rho^{\varepsilon} (u^{\varepsilon})^2\right)+\frac{1}{\varepsilon^2}\partial_{x}p(\rho^{\varepsilon})=-\frac{\sigma}{\varepsilon^2}\rho^{\varepsilon} u^{\varepsilon},
\end{aligned}\right.  \quad (x,t) \in \R \times \R_{+}.
\end{equation*}
The corresponding asymptotic regime in the limit $\varepsilon\to 0$ is given by:
\begin{equation*}
\left\{\begin{aligned}
&\partial_{t}\overline{\rho}-\partial_{xx}p(\bar{\rho})=0,  \\
&\partial_{x}p(\bar{\rho})=-\sigma\bar{\rho}\bar{u},
\end{aligned}\right.\quad (x,t) \in \R \times \R_{+}.
\end{equation*}
Similarly, the visco-elastic system reads as follows:
\begin{equation*}
\left\{\begin{aligned}
& \partial_{t}u^{\varepsilon}-\pa_{x}v^{\varepsilon}=0,\\
& \partial_{t}v^{\varepsilon}-\partial_{x}\gamma(u^{\varepsilon}) -\partial_{x}z^{\varepsilon}=0,\\
& \partial_{t}z^{\varepsilon}-\frac{\mu}{\varepsilon^2}\partial_{x}v^{\varepsilon}=-\frac{\sigma}{\varepsilon^2}z^{\varepsilon},
\end{aligned}\right.  \quad (x,t) \in \R \times \R_{+},
\end{equation*}
where the asymptotic regime satisfies the following system:
\begin{equation*}
\left\{\begin{aligned}
&\partial_{t}\overline{u}-\partial_{x}\bar{v}=0,  \\
&\partial_{t}\bar{v}-\partial_{x}\gamma(\bar{u})=\mu \partial_{xx}\bar{v},\\
&\mu\partial_{x}\bar{v}=\sigma\bar{z},
\end{aligned}\right.\quad (x,t) \in \R \times \R_{+}.
\end{equation*}

The numerical results are displayed Figures \ref{fig-euler} and \ref{fig-visco}. We still observe a convergence rate in $O(\varepsilon^4)$, which is in good agreement with results established Theorem \ref{thm:2} (see also \cite{Lattanzio2013}).

\begin{figure}
\centering
\subfigure[Discontinuous initial data]{\includegraphics[width=2.8in]{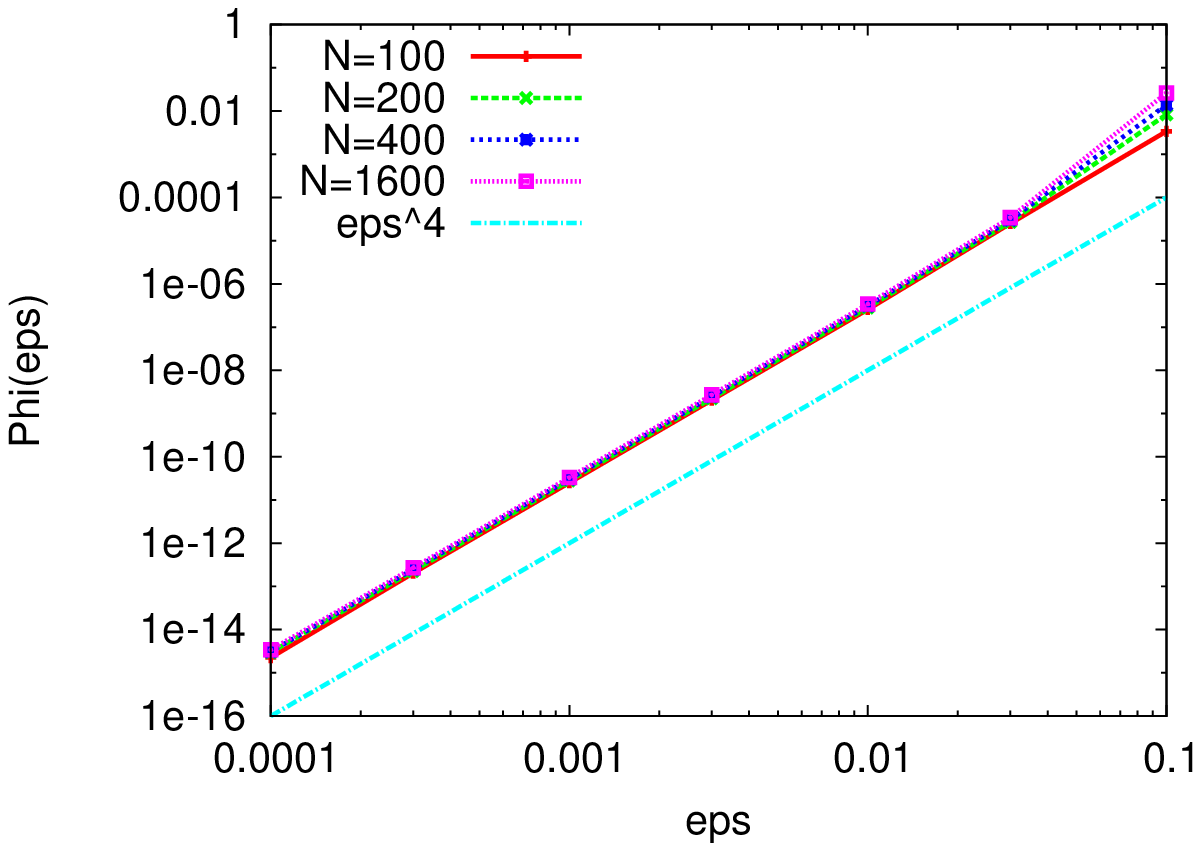}}
\subfigure[Smooth initial data]{\includegraphics[width=2.8in]{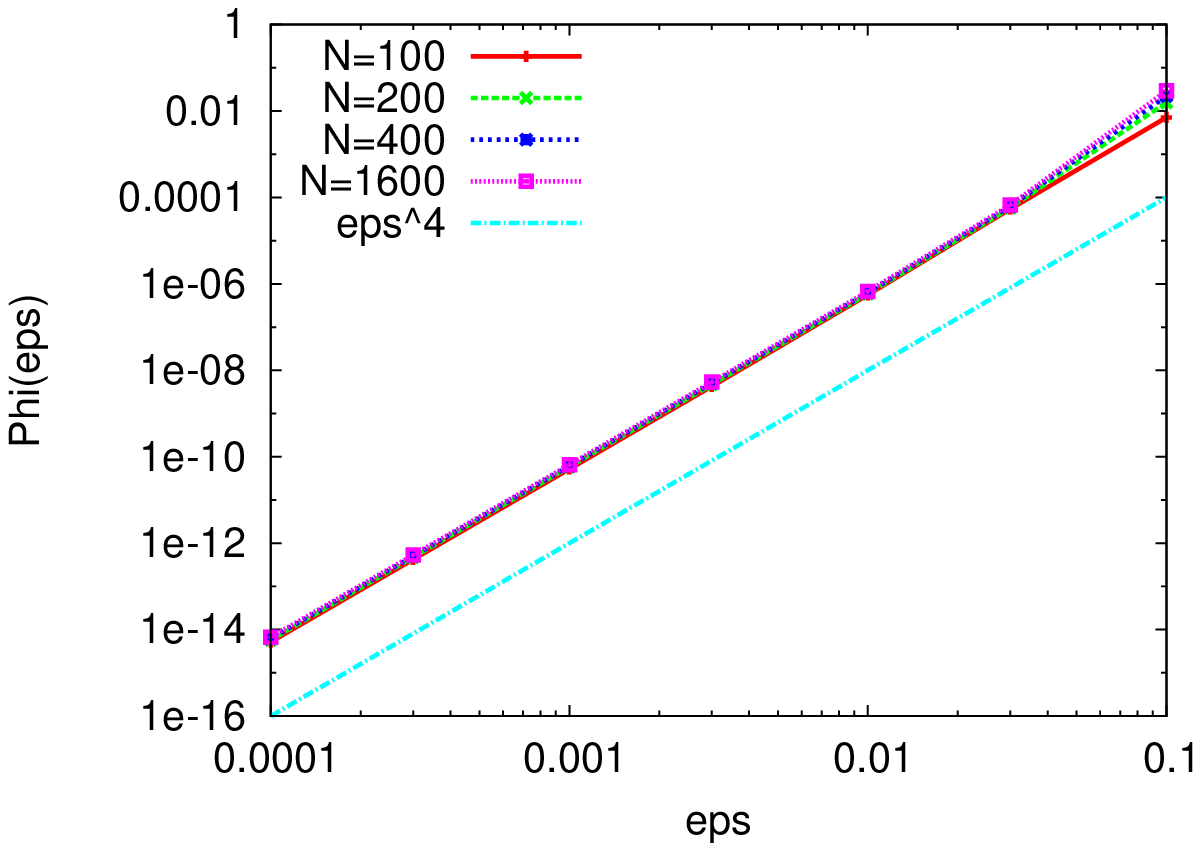}}
\caption{Isentropic Euler system: space integral of the relative entropy $\phi^{\varepsilon}$ with respect to $\varepsilon$ in log scale.}
\label{fig-euler}
\end{figure}

\begin{figure}
\centering
\subfigure[Discontinuous initial data]{\includegraphics[width=2.8in]{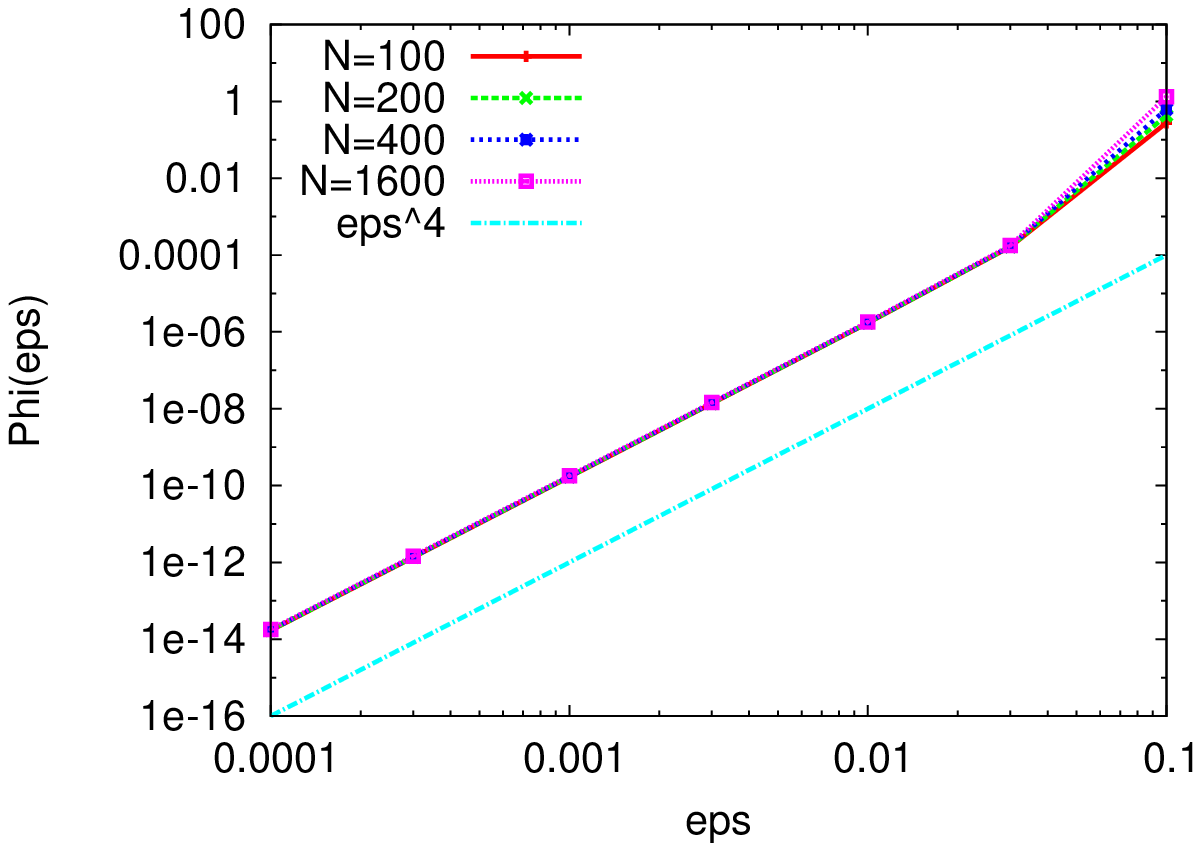}}
\subfigure[Smooth initial data]{\includegraphics[width=2.8in]{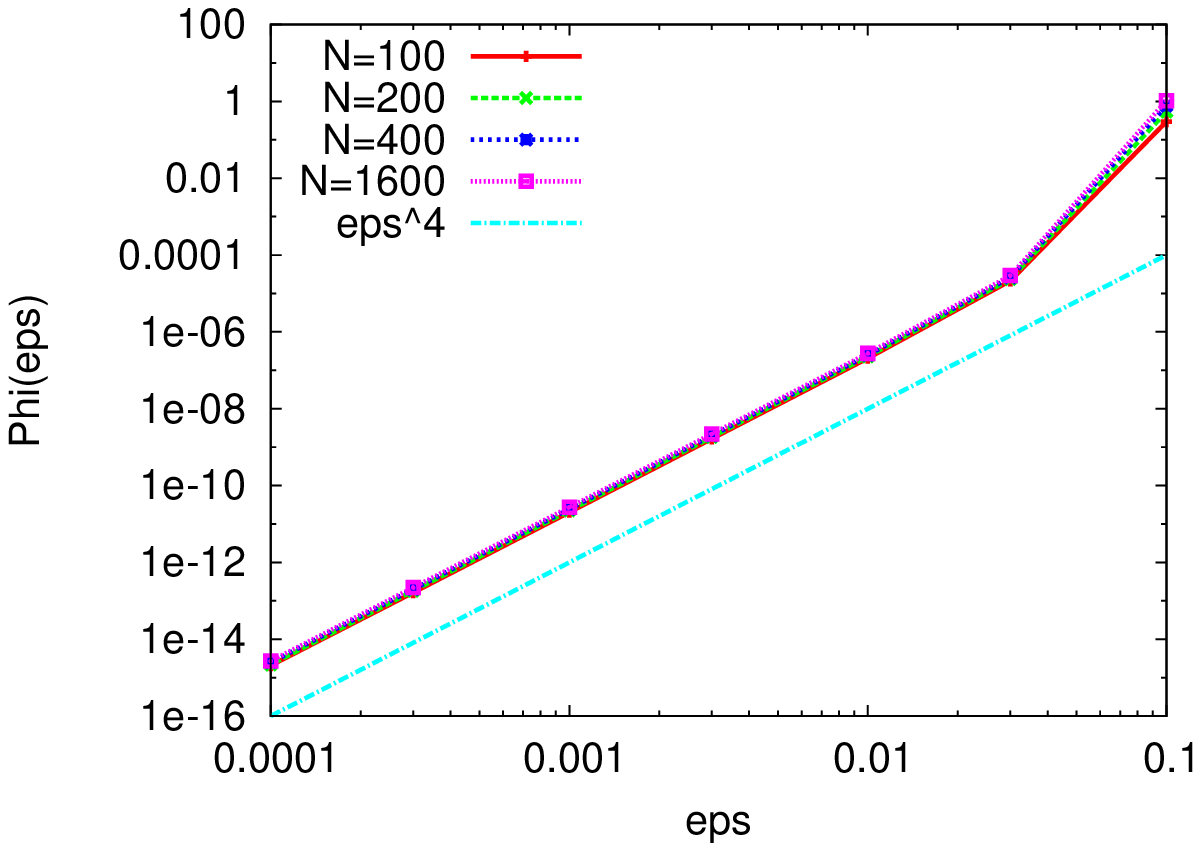}}
\caption{Visco-elastic model: space integral of the relative entropy $\phi^{\varepsilon}$ with respect to $\varepsilon$ in log scale.}
\label{fig-visco}
\end{figure}

\ \\
\noindent{\bf Acknowledgements.}
The authors thank the project ANR-12-IS01-0004 GeoNum and the project
ANR-14-CE25-0001 Achylles for their partial financial contributions.

%------------------------------------
\bibliographystyle{plain}
\bibliography{biblio_lim_diff}

\end{document}